\newcommand{\R}{\mathbb{R}}
\newcommand{\LL}{\mathcal{L}}
\newcommand{\GG}{\mathcal{G}}
\newcommand{\EE}{\mathcal {E}}
\newcommand{\PP}{\mathcal{P}}
\newcommand{\DD}{\mathcal{D}}
\newcommand{\CC}{\mathcal{C}}
\newcommand{\TT}{\mathcal{T}}
\newcommand{\FF}{\mathcal{F}}
\newcommand{\QQ}{\mathcal{Q}}
\newcommand{\xx}{\mathbf{x}}
\newcommand{\ff}{\mathbf{f}}
\newcommand{\kk}{\mathbf{k}}
\newcommand{\mm}{\mathbf{m}}
\newcommand{\vv}{\mathbf{v}}
\newcommand{\uu}{\mathbf{u}}
\newcommand{\vvb}{\bar{\mathbf{v}}}
\newcommand{\vb}{\bar{v}}
\newcommand{\yy}{\mathbf{y}}
\newcommand{\nn}{\mathbf{n}}
\newcommand{\ga}{\alpha}
\newcommand{\gb}{\mathbf{\beta}}
\newcommand{\gc}{\gamma}
\newcommand{\gd}{\delta}
\newcommand{\GD}{\Delta}
\newcommand{\ve}{\varepsilon}
\newcommand{\Om}{\Omega}
\newcommand{\vf}{\varphi}
\newcommand{\gl}{\lambda}
\newcommand{\GC}{\Gamma}
\newcommand{\gtt}{\boldsymbol{\tau}}
\newcommand{\gz}{\zeta}
\newcommand{\gs}{\sigma}
\newcommand{\GS}{\Sigma}
\newcommand{\dx}{\mathrm{d}\,}
\newcommand{\po}{\partial}
\newcommand{\X}{\times}
\renewcommand{\div}{\text{\rm div}\,}
\newcommand{\grad}{\nabla}
\newcommand{\dist}{\text{\rm dist}\,}
\newcommand{\bea}{\begin{eqnarray}}
\newcommand{\eea}{\end{eqnarray}}
\theoremstyle{plain}
\newtheorem{theorem}{Theorem}[section]
\newtheorem*{main}{Main Theorem}
\newtheorem*{problem}{Main Problem}
\newtheorem*{prob}{Reduced Problem}
\newtheorem*{lem}{Lemma}
\newtheorem{proposition}{Proposition}[section]
\theoremstyle{definition}
\newtheorem{definition}{Definition}[section]
\theoremstyle{remark}
\newtheorem{remark}{Remark}[section]
\numberwithin{equation}{section}
\begin{document}
\title{Subsonic flows  past a profile with a vortex line at the trailing edge }

\author
{ Jun Chen \qquad Zhouping Xin \qquad Aibin Zang }
\address{J. Chen, Center of Applied Mathematics\\
         Yichun  University \\
        Yichun, Jiangxi 336000, P.R. China}
\email{chenjun@jxycu.edu.cn}
\address{Z. Xin, The Institute of Mathematical Sciences, The Chinese University of Hong Kong,\\
	Shatin, Hong Kong}
\email{zpxin@ims.cuhk.edu.hk}
\address{A. Zang, Center of Applied Mathematics\\
	Yichun  University \\
	Yichun, Jiangxi 336000, P.R. China}
\email{abzang@jxycu.edu.cn}
\date{}
%

\begin{abstract}
We established the existence, uniqueness and stability of  subsonic flows past an airfoil with a vortex line at the trailing edge. Such a flow pattern is governed by the two dimensional steady compressible Euler equations. The vortex line attached to the trailing edge  is a contact discontinuity for the Euler system and is treated as a free boundary. The problem is formulated and solved by using the implicit function theorem. The main difficulties are due to the fitting of the vortex line with the profile at the trailing edge and the possible subtle instability of the vortex line at the far field. Suitable choices of the weights and elaborate barrier functions are found to deal with such difficulties.
\end{abstract}

\maketitle

\section{Introduction}

A century ago, Prandtl developed the celebrated lifting line theory in \cite{Prandtl}, where he depicted the picture of a subsonic flow past an airfoil with a vortex line attached to the trailing edge (see Figure.\ref{fig:domainx}).
Although his theory has been  widely applied in the industry of aircraft designs, there is no mathematical justification of such solutions yet. The purpose of this paper is to provide a rigorous proof of the existence and uniqueness of the solutions to the airfoil problem with vortex lines.

In 1950's, Bers \cite{Bers,Bers2}, Finn and Gilbarg \cite{fg,fg2} studied airfoil problems for the potential flows. Since potential flows are rotation-free, the vortex lines do not appear in such type of flows. In order to study vortex lines, the full compressible Euler equations are needed (see equations \eqref{Euler1}--\eqref{Euler3}). In the theory of conservation laws, vortex lines belong to a class of contact discontinuities.

There are many studies related to steady subsonic flows in various  physically important situations, such as flows past a solid body and in nozzles, see \cite{Bers,Bers2,cdxx,cx,CCF,CCF2,CDX,CF,cj,dxx,fg,fg2,lxy,xx,xx2,xyy} and the references therein. However the literatures on steady compressible subsonic flows with contact discontinuities are limited, even for the case  when   airfoils  do not appear in the flows, with several notable exceptions: 
the contact discontinuities in Mach configurations are studied in \cite{Chensx} by using  the theory for elliptic equations with discontinuous data developed  in \cite{Liyy}; the contact  discontinuities in subsonic flows in nozzles have been analyzed in \cite{Bae} by making use  of the theory in \cite{Liyy} and  in \cite{BP} with a Helmholtz decomposition; while the authors in \cite{CHWX} employed  the theory of compensated compactness to obtain the contact discontinuities with large vorticity in arbitrary infinite long nozzles. We also refer to \cite{CKY,CKY2} for studies on contact discontinuities in transonic flows, \cite{WY}  in supersonic flows and \cite{cdxx,cx,CCF,CCF2,CDX,CF,cj,dxx,fg,fg2,lxy,xx,xx2,xyy} for other related  problems which involve subsonic Euler flows. However, all these works concerning contact discontinuities [1,2,11,15,25] do not involve the presence of  airfoils, which causes some new difficulties in the study of contact discontinuities.

In this paper, we will give a rigorous analysis for Prandtl's problem on subsonic flows past a finite thin airfoil with a vortex line.  The key points lie in understanding  the fitting of the trailing edge with the vortex line and the asymptotic behavior of the vortex line at the downstream.   The problem will be formulated and proved as the global structural stability (or instability) of a straight contact  line under the perturbation of finite thin profiles. More precisely, we take the straight contact line as the horizontal axis and the vertical line at the leading edge of the airfoil as the entrance of the flow (see Figure 1). By prescribing piecewise smooth data  corresponding background contact discontinuity on the entrance, and assuming  that the airfoil is suitably thin with the trailing edge being a cusp point (where the upper and lower boundaries of the airfoil meet at zero angle), we will find the unique subsonic solution with a vortex line attached to the trailing edge of the airfoil, which is H\"{o}lder continuous up to  the edges of the airfoil. Furthermore, the vortex line is shown to fit smoothly ($C^{1,\alpha}$) with the trailing edge of the profile and grows at most sublinearly at infinity. This implies in particular the structural stability of the background contact discontinuity. It is noted that the assumption of the trailing edge being a cusp is crucial here to guarantee  the smooth fitting of the vortex line at the trailing edge and the continuity of the flow up to the trailing edge.

Note that since the vortex line is part of the solution and an unknown, so this is a free boundary value problem for the full Euler system with slip boundary condition on the airfoil and suitable boundary conditions on the entrance. There are several new difficulties for treating such a problem. Since the full compressible Euler equations are a coupled elliptic-hyperbolic system for subsonic flows, the regularities of the unknowns  become crucial issues in analyzing such flows, even the formulation of boundary data becomes a subtle issue. Such difficulties are more pronounced for our problem here due to the presence of the airfoil and the vortex line. Indeed, there are some essential new difficulties in analyzing the subsonic flow past an airfoil with a vortex line at the trailing edge. The first one is about the smoothness of the fitting of the vortex line with the profile at the trailing edge. The second  one is that the flow is defined on the unbounded domain and the possible weak stability of the vortex line at far field. The other difficulty is related to the application of implicit function theorem as the framework of solving this problem.

The fitting of the trailing edge with the vortex line raises the issue about  corner regularities,   causing major difficulties in studying steady flows involving subsonic regions for the steady full Euler system due to the hyperbolic modes for subsonic flows ([5-10, 14, 21, 28]). Thus, handling the  regularities at the edges becomes an important issue. It should be noted that ``corner regularities'' are also key issues in the studying of transonic shocks in general curved nozzles (due to the intersection of the shock curve with the nozzles walls) as shown in \cite{xyy}. Yet, in this case, an Euler-Lagrangian type coordinates transformation can be introduced to decompose effectively the hyperbolic modes from the elliptic modes in the subsonic region after the shock and this reduces the Euler system into a nonlocal elliptic system for the pressure and the flow angle, so that one can obtain $C^\alpha$-regularity of the physical states in the subsonic region after the shock uniformly up to the corner (the intersection of the shock with the nozzle wall), see \cite{CCF,lxy}. These $C^\alpha$-regularities are the key to design some iteration schemes to solve the transonic shock problem in a generic 2-dimensional nozzles in \cite{CCF,lxy}. Motivated by such studies, we will look for solutions to the Prandtl's problem on subsonic flows past a thin finite profile with a vortex line, which is  $C^\alpha$-regular uniformly up to the profile including edges. To derive the $C^{\alpha} $ uniform regularity, we will decompose the Euler system for subsonic flows into hyperbolic system for the specific entropy and the  Bernoulli function and an elliptic system for the flow angle and reciprocal of the horizontal momentum. Under the crucial assumption that the trailing edge of the profile is a cusp, we can obtain the uniform $C^{\alpha}$ estimate up to the edges by constructing proper and  elaborate barrier functions. Note that these crucial estimates involve also the second difficulty that the flow domain is unbounded and the background contact discontinuity may not be asymptotically stable in the far field. The matching of the decay of physical states and the possible derivation of the vortex line from the background contact discontinuity at far field has to be carefully checked. Indeed, we will show that the vortex line may grow sublinearly away from the background contact discontinuity, but their unit normals approach each other in $L^\infty$-norm at far fields. These will be achieved by weighted H\"{o}lder estimates.

The other difficulty comes from the application of the implicit function theorem which we use as the framework to solve the problem of subsonic flows past an airfoil with a vortex line. Note that in many studies of transonic shocks [7-10,21,28], since the upstream supersonic flow can be obtained in advance, and the downstream subsonic flow and the position of the transonic shock have to be solved simultaneously by some elaborate iteration schemes with the position of the shocks as the solvability condition for the subsonic flow, thus the framework of either contraction mapping theorem or Schauder fixed point theorem can be used to find the desired transonic shock solutions.  However, for the problem of subsonic flows past an airfoil with a vortex line at the trailing edge, the flows on both sides of the airfoil and the vortex line are unknown and may grow asymptotically. This and the less regularity of the flows at the edges make it difficult to design an iteration scheme to apply the framework of the fixed point theorem as in the transonic shock problem. On the other hand, in suitable Euler-Lagrangian coordinate systems (see \eqref{def-coord}-\eqref{def-coord2} in \S3), both the upper and lower boundary of the airfoil and the vortex line become a stream line, thus one may try to use the approach in [1] to study our problem. Unfortunately, this does not work since the piecewise smooth subsonic solution do not form a weak solution across the segment of the particle line representing the airfoil. Thus, in this paper, we will use the implicit function theorem as the framework for solving the problem of subsonic flows past a thin airfoil with a vortex line attached at the trailing edge. This will require a suitable choice of weighted H\"{o}lder space, a proper design of a map $\TT$ (see \S5), and detailed analysis, especially for the isomorphism of the differential of the operator $\TT$.

It is also interesting to compare the problem studied in this paper with the transonic wedge problems in [8]. For the transonic wedge problem in [8], the regularity at the corner and the asymptotic behavior of the shock at far field are closely related through the oblique condition on the shock and the shock is asymptotically stable. However, for the problem of subsonic flows past an airfoil with a vortex line attached at the trailing edge, the upper and lower domains are separated by the airfoil and the vortex line. Since the conditions on the airfoil and the vortex line are different, which causes not only less regularity at the edges, but also gives rise to different phenomena from the transonic wedge problem in [8]. In fact, here the lower order regularities at edges can be localized so that they do not affect the asymptotic behavior of the vortex line at the far field, and contact discontinuities are weakly stable in the sense that the perturbed vortex line may grow at most sublinearly away from the background contact discontinuity and their unit normals converge to each other in $L^\infty$-norm at far field (see Remark \ref{r2.2}). This reveals significant differences between the structural stability of shocks and contact discontinuities.

The rest of the paper is organized as follows. In \S 2, we set up  the problem  and  state the main result. In \S 3, we  reformulate the the problem by introducing the Euler-Lagrange type coordinate transformations  and reducing the Euler equations into an elliptic system of two equations on upper and lower domains respectively.  \S 4 is about linearizations of the elliptic systems and related elliptic estimates. In \S 5, we define a map $\TT$ and show the properties of the differential of $\TT$. By using the implicit function theorem, we can solve an equation defined by $\TT$ and locate the contact discontinuity.

\section{Statement of the problem and the main result} \label{sec-setup}

Consider the following  two-dimensional steady
Euler equations:
\begin{align}
& \div (\rho \uu)=0,\label{Euler1}\\
&\div \left(\rho{\uu\otimes\uu}\right)
+\nabla p=0,\label{Euler2}\\
&\div \left(\rho\uu\left(E+\frac{p}{\rho}\right)\right)=0,
\label{Euler3}
\end{align}
where $\grad$ is the gradient in $\xx=(x_1,x_2)\in\R^2$, $\uu=(u_1,
u_2)^\top$ is the velocity field, $\rho$ is the density, $p$ is the pressure, and
$$
E=\frac{1}{2}|\uu|^2+\frac{p}{(\gamma-1)\rho}
$$
is the energy with adiabatic exponent $\gamma>1$. The sonic speed of
the flow is
$$
c=\sqrt{\gamma p/\rho}
$$
and the Mach number is
$$
M=\frac{|\uu|}{c}.
$$
For a subsonic flow, the Mach number $M <1$ at any point of the flow.

{\bf Conditions on contact discontinuity curves.} Suppose that a domain $\DD$  in $\R^2$ is divided by a $C^1$ curve $\CC$ into subdomains $\DD^+$ and $\DD^-$. Assume that  $U=(\rho, \uu, p)^\top$ is a piecewise $C^1$ solution of Euler equations \eqref{Euler1}--\eqref{Euler3} in each domain $\DD^+$ and $\DD^-$ and is continuous up to the boundary $\CC$ in each subdomain. We denote the restriction of $U$ on $\DD^+\cup \CC$ by $U^+$ and on $\DD^-\cup \CC$ by $U^-$. If $U$ is a weak solution for the Euler equations in the whole domain   $\DD$,   using integration by parts gives rise to the following so called Rankine-Hugoniot conditions on the curve $\CC$:
\begin{align}
&  [\rho \uu]\cdot \nn =0,\label{RH1}\\
&\left[\rho{\uu\otimes\uu}+pI\right]
 \nn  =\mathbf{0},\label{RH2}\\
&\left[\rho\uu (E+\frac{p}{\rho})\right]\cdot \nn=0,
\label{RH3}
\end{align}
where $\nn$ is the unit normal vector on $\CC$ and the bracket $[\ ]$ denotes the jump of the dependent variable from one subdomain to the other, i.e., for any smooth function $f: \R^4 \to \R$, $[f(U)]= f(U^+)- f(U^-)$.
Condition \eqref{RH2}, taken dot product with $\nn$ and with  a unit tangential vector $\gtt$ respectively, leads to
 \begin{align}
 &  [\rho (\uu \cdot \nn)^2 +p]  =0,\label{RH4}\\
 &[\rho (\uu \cdot \nn) (\uu \cdot \gtt)]  =0,\label{RH5}.
 \end{align}
When the flow moves across $\CC$, i.e., $\uu \cdot \nn \neq 0$ on $\CC$, with the entropy condition,  the curve $\CC$ is called a shock; if the flow moves along both sides of $\CC$ so that  $\uu \cdot \nn \equiv 0$ on $\CC$, then $\CC$ is said to be a contact discontinuity or a characteristic discontinuity. In the latter case, condition \eqref{RH4} with
 \begin{align}
& \uu^{\pm} \cdot \nn = 0\label{RH6}
\end{align}
 gives
\begin{align}
& p^+ = p^-\label{RH7}
\end{align}
along the contact discontinuity $\CC$. It is obvious that conditions \eqref{RH6} and \eqref{RH7} together ensure
the Rankine-Hugoniot conditions \eqref{RH1}--\eqref{RH4} and will be used later as the conditions for a contact discontinuity.

\begin{figure}
	\centering
	\includegraphics[width=0.75\textwidth]{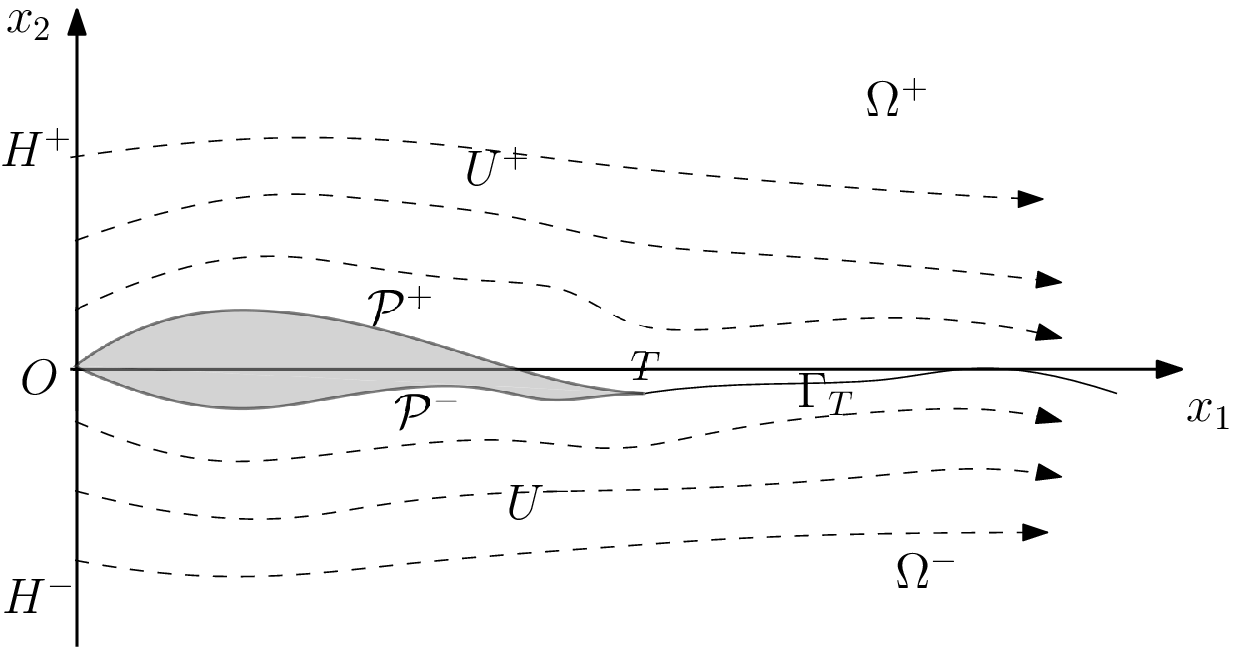}
	\protect\caption{Domain in $\xx$-coordinates.\label{fig:domainx}}
\end{figure}

The profile is bounded above by $\PP^+$ and below by $\PP^-$,  while   $\PP^+$ and  $\PP^-$ meet at the leading edge, fixed as the origin $O$,  and trailing edge $T$ (see Figure. \ref{fig:domainx}).

A contact discontinuity curve $\GC_T$ emanates from  $T$, and stretches  to infinity.  Denote $H^+,H^-$ as the positive and negative $x_2$-axis, respectively, and
 \begin{align*}
\GC^{\pm}:=   \PP^{\pm} \cup \GC_T .
 \end{align*}

Let $\Om^{\pm} $  be the open domains bounded by   the curves $H^{\pm}$ and $\GC^{\pm}$, respectively.
\begin{definition}
	A triple $(U^+, U^-,\Gamma_T)$ is called a {\bf subsonic profile solution} to the problem of a steady subsonic flow past a file with a vortex line at the trailing edge, if  the following conditions are satisfied:
	\begin{enumerate}
        \item $\Gamma_T$ is $C^1$-curve connecting $T$ to infinity;
		\item $U^{\pm}$  are $C^1$ solutions to Euler equations \eqref{Euler1}--\eqref{Euler3} in $\Om^{\pm}$, continuous up to $\GC^{\pm}$, respectively, possibly except at the edge points $O$ and $T$;
        \item $U^{\pm}$  satisfy  the contact discontinuity conditions \eqref{RH4} and  \eqref{RH5} on   $\GC_T$,   the slip condition  \eqref{RH6} on $\PP^{\pm}$, and suitable boundary conditions on $H^{\pm}$ and far field respectively;
        \item  $M^{\pm}(\xx) <1$  for all $\xx \in \Om^{\pm}$ .
 	\end{enumerate}
\end{definition}

Since the full Euler system is coupled elliptic-hyperbolic for subsonic flows, thus it is a subtle problem to formulate the suitable boundary conditions \cite{xx2}. To study the well-posedness of the problem for a subsonic profile solution, we need to formulate appropriate boundary conditions at both upstream and the far field.

Let the background contact discontinuity be given by some uniform states $(U^+_0, U^-_0)$ with a straight line as the corresponding contact continuity curve. By choosing a proper coordinate system, we may assume that the background contact discontinuity line is the horizontal axis. Then it follows from the definition of a contact discontinuity that
\begin{align}
U^\pm_0 &= (\rho^\pm_0, q^\pm_0, 0, p_0)^\top \label{U0+}
\end{align}
which are assumed to satisfy
\begin{align}\label{M0}
q^\pm_0 >0, \quad M^\pm_0<1,
\end{align}
where the last condition means that both $U^+_0$ and $U^-_0$ are subsonic. Take the origin, $O$, at the leading edge of the profile and the vertical axis as the upstream entrance of the flow, denoted as $H^\pm$, see Figure 1. At the upstream entrance $H^\pm$, we prescribe boundary data for the entropy function $A$, the Bernoulli quantity $B$, and the horizontal mass flux distribution $\mm \cdot \boldsymbol{\nu}= m_1$, where
\begin{align}\label{def-ABm}
A:= \frac{p}{\rho^{\gamma}},&& B:= \frac{1}{2}|\uu |^2 + \frac{\gc p }{(\gc-1)\rho} ,&& \mm = (m_1,m_2):=\rho \uu,
\end{align}
and $\boldsymbol{\nu}= (1,0)$  is the unit normal on the $H^{\pm}$. Precisely, we set
\begin{align} \label{con-bdHpm}
(A^{\pm},B^{\pm},m_1^{\pm}) = (A^{\pm}_0,B^{\pm}_0,m_{10}^{\pm}) \quad  \text{ on } H^{\pm}
\end{align}
with
\begin{align*}
 (A^{\pm},B^{\pm},m_1^{\pm}) &=    \left(\frac{p^{\pm}}{(\rho^{\pm})^{\gc}},  \frac{1}{2}|\uu^{\pm} |^2 + \frac{\gc p^{\pm} }{(\gc-1)\rho^{\pm}} , \rho^{\pm} u^{\pm}_1 \right)\\
  (A^{\pm}_0,B^{\pm}_0,m_{10}^{\pm})   &=    \left(\frac{p_0}{(\rho_0^{\pm})^{\gc}},  \frac{1}{2}(q_0^{\pm})^2 + \frac{\gc p_0}{(\gc-1)\rho_0^{\pm}} , \rho_0^{\pm} q^{\pm}_0 \right).
 \end{align*}

At the downstream, we require that the unit normal of $\Gamma_T$ approaches (0,1) as $x_1 \rightarrow +\infty$, and at the far field, the corresponding flow converges asymptotically to $U^\pm_0$ on $\Omega^\pm$ respectively.  More precisely, the unit normal of $\Gamma_T \rightarrow (0,1)$ as $x_1\rightarrow +\infty$,
\begin{align}
U^\pm(x) \rightarrow U^\pm_0 \quad \text{as} \quad x\in\Omega^\pm \quad \text{and} \quad |x|\rightarrow +\infty.\label{upmx}
\end{align}

Obviously, $(U^+_0, U^-_0)$ is a trivial profile solution for a degenerate profile where $\PP^+=\PP^-$ are horizontal segments and the corresponding connected contact discontinuity line is also horizontal.

The main aim of this paper is to   find a profile solution  close to the background contact discontinuity for a thin  airfoil.  We define $\PP^{\pm}$ by the graphs of functions $\ve \gz^{\pm}$ on $[0,1]$, where $\ve$ represents the thickness of the profile and $ \gz^{\pm}$ are given functions with
 \begin{align}
\label{con-zeta1} & \gz^- (t) \le \gz^+ (t), \  0\le t\le 1,\\
 &\gz^+(0) =\gz^-(0) = 0,  \\
&\gz^+(1) =\gz^-(1) = h_0, \\
\label{con-zeta4}& (\gz^+)'(1) =(\gz^-)'(1) = k^*,
 \end{align}
where $\gz^{\pm}$ are fixed, $h_0$ and $k^*$ are constants and so the trailing edge is $T=(1, \ve h_0)$, a cusp point where the slopes of both tangents of $\PP^+$ and  $\PP^-$ are $\ve k^*$. 

Suppose that the contact discontinuity curve  $\GC_T$ can be defined by the graph of  $g_T(t), t\ge 1$, which is continuously connected to the profile at the trailing edge, i.e.
\[
g_T ( 1) =\ve h_0.
\]

Define the boundary functions $g^{\pm}$  piecewisely as follows:
\begin{equation} \label{def-gc}
g^{\pm}(t) = \begin{cases}
\ve \gz^{\pm}(t) &\text{ if } \quad 0\le t\le 1 ,\\
g_T(t) & \text{ if } \quad t>1.
\end{cases}
\end{equation}

\begin{problem}
	Given  $U^\pm_0$ by \eqref{U0+} satisfying \eqref{M0} and a profile bounded by $\PP^\pm$, find a subsonic profile solution$(U^+, U^-,\Gamma_T)$ satisfying the upper stream boundary conditions \eqref{con-bdHpm} and the downstream far field conditions \eqref{upmx}.
\end{problem}

To state the precise results in this paper,  we need to define some weighted H\"older spaces and corresponding norms.  Let $\DD$ be a 2-dimensional domain and $P$ be a given set in $\R^2$. For any $x,x'\in\DD$, define
\begin{align*}
	&\gd_\xx := \min\left\{  \mbox{dist}(\xx,P), 1\right\},
	& &\gd_{\xx,\xx'} :=\min\{\gd_\xx, \gd_{\xx'}\},\\
	&\GD_\xx :=|\xx|+1,
	& &\GD_{\xx, \xx'}:= \min\{|\xx|+1, |\xx'|+1\}.
\end{align*}
Let $\ga \in (0,1)$, $\sigma, \tau \in \mathbb{R}$,    $k$
a nonnegative integer, $\kk = (k_1, k_2)$   an integer-valued
vector with $k_1, k_2 \ge 0$, $|\kk|=k_1 +k_2$, and $D^{\kk}=
\po_{x_1}^{k_1}\po_{x_2}^{k_2}$. For a function $f$ defined on $\DD$, we set
\begin{align}
\label{def-normsemi}
&[ f ]_{k,0;(\tau);\DD}^{(\sigma;P)}
:= \sup_{ \begin{array}{c}
	\xx\in \DD\\
	|\kk|=k
	\end{array}}\gd_{\xx}^{\max\{k+\sigma,0\}}\GD_\xx^{\tau+k}|D^\kk f(\xx)|, \\
\label{def-norm} & {[ f ]}_{k,\ga;(\tau);\DD}^{(\sigma;P)}
:= \sup_{
	\begin{array}{c}
	\xx, \xx'\in \DD\\
	\xx \ne \xx', |\kk|=k
	\end{array}}
\gd_{\xx,\xx'}^{\max\{k+\ga+\sigma,0\}}
\GD_{\xx,\xx'}^{\tau+k+\ga}\frac{|D^\kk f(\xx)-D^\kk f(\xx')|}{|\xx-\xx'|^\ga},
\\
& \|f\|_{k,\ga;(\tau); \DD}^{(\sigma;P)}
:= \sum_{i=0}^k {[f]}_{i,0 ;(\tau);\DD}^{(\sigma;P)}
+ {[f]}_{k,\ga;(\tau);\DD}^{(\sigma;P)}.
\label{def-normvector}
\end{align}
Define the weighted H\"older spaces as:
\begin{align}
C^{k,\ga;(\tau)}_{(\sigma;P)}(\DD)&:= \{ f:  \|f\|_{k,\ga;(\tau); \DD}^{(\sigma;P)} < \infty
\}.
\label{def-space}
\end{align}
For a vector-valued function $\ff=(f_1, f_2, \cdots,f_n )^\top$, we
define
\[
\|\ff\|_{k,\ga;(\tau);\DD}^{(\sigma;P)}
:=\sum_{i=1}^n  \|f_i\|_{k,\ga;(\tau);\DD}^{(\sigma;P)}.
\]

If $f$ is a function of one variable defined on an open set  $\LL \subset \R $ and $P$ is a set of some boundary or interior points of $\LL$,
the H\"older norms $\|f\|^{(\sigma;P)}_{k,\ga;(\tau);\LL}$ can be defined similarly. Furthermore, if there is no need to consider decay or growth rate at far field, we will drop the index  $\tau$,  written as $\|f\|^{(\sigma;P)}_{k,\ga;\DD}$. In this case, the factors  $\GD_\xx^{\tau+k}$ and $ \GD^{\tau+k +\ga}_{\xx,\xx'}$ in the definitions \eqref{def-normsemi} and \eqref{def-norm}, respectively, do not appear. Similarly, when the index $(\gs;P)$ does not appear in the norm, it means that $\gd_{\xx}^{\max\{k+\sigma,0\}}$ and $ \gd_{\xx,\xx'}^{\max\{k+\ga+\sigma,0\}}$ in the definitions \eqref{def-normsemi} and \eqref{def-norm}, respectively, do not appear.

 Denote
\begin{align*}
&\EE:= \{O,T\}.
\end{align*}
Then the main results in this paper can be stated as follows.

\begin{main}
Given $\ga\in (0,\frac{1}{2}), \gb\in (0,1)$,  $\zeta^{\pm} \in C^{3,\ga}_{((-\ga-1);\{0,1\})}((0,1)) $ satisfying conditions \eqref{con-zeta1}--\eqref{con-zeta4}, there exist positive constants $\ve_0$ and $C$, depending on $U_0^{\pm},\zeta^{\pm},\ga,\gb$, so that for any $\ve \in [0,\ve_0]$, there exists a subsonic profile solution $(U^+, U^-, \Gamma_T)$ with the related   contact discontinuity curve $\GC_T$ defined by the graph of $g_T$ to the {\bf Main Problem}, satisfying
\begin{align}
\nonumber & g_T(1)= \ve h_0, \quad g_T'(1) = \ve k^*,\\
  &\| U^{\pm} - U^{\pm}_0\|_{2,\ga;(\gb); \Om^{\pm}}^{(-\ga;\EE)}+ \| g_T\|_{3,\ga;( \gb-1); (1,\infty)}^{(-\ga-1 ;\{1\})}  \le C\ve. \label{est-Ugc}
\end{align}	
Such a solution $(U^+, U^-,\Gamma_T)$ to the {\bf Main Problem} is unique in the class defined by  \eqref{est-Ugc}.
\end{main}

\begin{remark}\label{r2.1} Some explanations  on the the assumptions and results in the main theorem are given as follows
\begin{enumerate}
\item For the thin airfoil,  the assumption $\zeta^{\pm} \in C^{3,\ga}_{((-\ga-1);\{0,1\})}((0,1)) $ implies that $\zeta^{\pm}$ are $C^{3,\ga}$ in the open interval $(0,1)$,  moreover, $\zeta^{\pm}$ are also $C^{1,\alpha}$ up to the corner points $O$ and $T$. However, the second derivatives of  $\zeta^{\pm}$ may be discontinuous at corner points  $O$ and  $T$.
\item The superscript $(-\alpha;\EE)$ of the term $ \| U^{\pm} - U^{\pm}_0\|_{2,\ga;(\gb); \Om^{\pm}}^{(-\ga;\EE)}$ in \eqref{est-Ugc}  implies that $ U^{\pm}$ have lower regularity  at corner points $O$ and  $T$.  In particular, $ U^{\pm}$ are  small perturbations from the background solutions $U^{\pm}_0$ up to the edges in the $C^{\alpha}-$norm. The index $\gb$ in the subscript implies that $ U^{\pm}(x)$ converge to $U^{\pm}_0$ as $x\in\Omega^\pm$ and $|x|\rightarrow\infty$ with the rate $|\xx|^{-\gb}$. 
\item The bound on $\| g_T\|_{3,\ga;( \gb-1); (1,\infty)}^{(-\ga-1 ;\{1\})} $ with superscript $(-1-\ga ;\{1\})$ means that the vortex curve $\Gamma_T$ is $C^{1+\alpha}$ continuous at the trailing edge $T$ so that the airfoil and the vortex line have a $C^{1+\alpha}$ continuous fitting at the trailing edge, while the subscript, index $\gb-1$, implies that the vortex curve $\Gamma_T$ may deviate from the background contact discontinuity line at the rate $x_1^{1-\gb}$ as $x_1 \to +\infty$, but the unit normal of $\Gamma_T$ converges to (0,1) at the rate $x^{-\beta}_1$ as $x_1\rightarrow+\infty$.
\end{enumerate}
\end{remark}

\begin{theorem}[Stability of vortex lines]\label{thm-stable}
Let $\ga,\gb, \zeta^{\pm}, \ve_0$ be the same as in the {\bf Main Theorem}. For any $\ve, \tilde{\ve} \in [0,\ve_0]$,  denote by  the corresponding subsonic profile solutions   $(U^{\pm}, \GC_T)$,  $(\tilde{U}^{\pm}, \tilde{\GC}_T)$ with the associated vortex lines $\GC_T$ and $\tilde{\GC}_T$ given as the graphs of $g_T$ and $\tilde{g}_T$,  respectively. Then
\begin{align}\label{est-stab}
 \| g_T - \tilde{g}_T\|_{3,\ga;( \gb-1); (1,\infty)}^{(-\ga-1 ;\{1\})}  \le C|\ve - \tilde{\ve} |.
\end{align}	
\end{theorem}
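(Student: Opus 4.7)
The plan is to invoke the implicit function theorem framework used to establish the Main Theorem. Writing the equation solved in \S5 as $\TT(\ve, V)=0$ with $V = (U^+, U^-, g_T)$ lying in the weighted H\"older space dictated by \eqref{est-Ugc}, the Main Theorem yields a unique solution $V_\ve = (U^+_\ve, U^-_\ve, g_{T,\ve})$ for each $\ve \in [0,\ve_0]$. The nontrivial work of \S5 produces an isomorphism $D_V\TT$ at the background state $(0, V_0)$ with $V_0 = (U_0^+, U_0^-, 0)$; by continuity of $D_V\TT$ in its arguments and the smallness of $\ve_0$, this isomorphism property persists uniformly along the family $\{(\ve, V_\ve)\}$.

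Given $\ve,\tilde\ve\in[0,\ve_0]$, I would subtract $\TT(\ve, V_\ve)=0$ and $\TT(\tilde\ve, V_{\tilde\ve})=0$, insert $\pm\TT(\ve, V_{\tilde\ve})$, and apply the fundamental theorem of calculus to both resulting increments. This yields
\begin{equation*}
\Bigl(\int_0^1 D_V\TT(\ve, V_s)\,ds\Bigr)(V_\ve - V_{\tilde\ve}) \;=\; -(\ve - \tilde\ve)\int_0^1 \partial_\ve \TT(\ve_t, V_{\tilde\ve})\,dt,
\end{equation*}
where $V_s := V_{\tilde\ve} + s(V_\ve - V_{\tilde\ve})$ and $\ve_t := \tilde\ve + t(\ve - \tilde\ve)$. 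The right-hand side is explicit: the $\ve$-dependence of $\TT$ enters only through the profile shape $\ve\gz^\pm$, the trailing edge position $T=(1,\ve h_0)$, and the Euler-Lagrange coordinate change from \S3, all smooth in $\ve$ and satisfying a uniform weighted bound. Consequently the right-hand side has weighted norm $\le C|\ve - \tilde\ve|$. Inverting the averaged linearization via the uniform isomorphism property then produces $\|V_\ve - V_{\tilde\ve}\| \le C|\ve - \tilde\ve|$ in the weighted H\"older class of \eqref{est-Ugc}, and extracting the $g_T$-component gives \eqref{est-stab}.

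The main obstacle will be verifying that the right-hand side of the displayed identity lies in exactly the weighted H\"older space in which $D_V\TT$ is invertible. Two competing constraints must be reconciled: the low corner regularity at $O$ and $T$, inherited from the class $C^{3,\ga}_{((-\ga-1);\{0,1\})}$ of the profile function $\gz^\pm$, and the sublinear growth of the vortex line at far field encoded by the weight $\gb-1$. The corner weight makes the $\ve$-derivative of the Euler-Lagrange change of coordinates delicate near the cusp $T$, and the weak stability of the contact discontinuity at infinity requires that the barrier function construction employed for the Main Theorem specialize cleanly to the averaged linearization so as to produce the decay of the normal of $\GC_T$ built into \eqref{est-Ugc}. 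Both points are, however, handled by essentially the same techniques already developed for the nonlinear problem in \S4-\S5, now applied to the averaged linearized equation above.
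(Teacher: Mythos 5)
Your proposal is correct and relies on the same technical foundations established in \S 4--\S 5, but it takes a noticeably longer route than the paper. The paper's proof is a two-liner: having already verified in the proof of the Main Theorem that $\TT$ is a $C^1$-map and that $D_w\TT(0,\mathbf 0)$ is an isomorphism, it simply invokes the last clause of the implicit function theorem (if $F$ is $C^1$ near $(x_0,y_0)$ then the implicit function $y(\cdot)$ is $C^1$ near $x_0$) to conclude that $w(\ve)$ is $C^1$ on $[0,\ve_0]$, and then applies the mean value theorem in $\GS$ to get
\begin{align*}
\|w(\ve)-w(\tilde\ve)\|_\GS \le \sup_{[0,\ve_0]}\|w'(\ve)\|\,|\ve-\tilde\ve| \le C|\ve-\tilde\ve|,
\end{align*}
which is exactly \eqref{est-stab} after passing from $w = g_T'$ back to $g_T$. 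You instead re-derive the Lipschitz dependence by hand: subtract $\TT(\ve,V_\ve)=0$ from $\TT(\tilde\ve,V_{\tilde\ve})=0$, use the fundamental theorem of calculus to produce an averaged linearization in both arguments, and then invert $\int_0^1 D_V\TT(\ve,V_s)\,ds$. This is valid given that \S 5 shows $D\TT$ is Lipschitz continuous and $D_w\TT(0,\mathbf 0)$ is invertible (so the averaged operator is uniformly invertible for $\ve_0$ small), but it re-proves precisely the conclusion that the IFT lemma already supplies. What your route buys is a more explicit accounting of where the constant $C$ comes from; what the paper's route buys is brevity, since the hard part (verifying $C^1$-smoothness of $\TT$ in the weighted spaces and the isomorphism of $D_w\TT(0,\mathbf 0)$) was already done and the stability theorem is a free corollary.

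Two minor points worth aligning with the paper. First, the unknown in the IFT formulation is not the full triple $V=(U^+,U^-,g_T)$ but only $w$ (essentially $g_T'$), living in the one-dimensional weighted space $\GS_0$; the fields $U^\pm$ are recovered from $w$ through the fixed boundary value problems of Proposition \ref{prop-nonlinear} and do not appear as independent arguments of $\TT$. Writing the IFT for the triple would require redefining both the domain and the codomain, which is not how \S 5 is set up. Second, the ``main obstacle'' you flag (checking that the right-hand side of your averaged identity lies in the weighted space on which $D_V\TT$ is invertible) is precisely the content of the differentiability verification of $\TT$ in \S 5, so there is nothing new to check once that material is in place; it just shows again that your argument is duplicating work rather than avoiding a gap.
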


\begin{remark}\label{r2.2}
This theorem yields the uniform structual stability of the vortex line in the weighted H\"{o}lder space.  However, it should be pointed out that the uniform global stability estimate (2.22) does not yield the asymptotic stability in the super-norm of the vortex lines.  Instead, it can be checked that (2.22) allows sublinear divergence of the vortex lines in $L^\infty$-norm.  However, it can be checked easily that the stability estimate (2.22) implies that the unit normals $n(x)$ and $\tilde{n}(x)$ of the vortex line $\Gamma_T$ and $\Gamma_{\tilde{T}}$ converge to each other in $L^\infty$-norm as $x_1\rightarrow+\infty$.
\end{remark}

 \section{Reformulation of the {\bf Main Problem}} \label{sec-lagrange}

Let $(U^+,U^-,\Gamma_T)$ be a subsonic profile solution. Then the conservation of mass equation   \eqref{Euler1} gives  a
 stream function in each domain of  $\Om^+$ and  $\Om^-$.
  For convenience, we will focus on  $\Om^+$ and the notations  for the problem in $\Om^-$ will be replacing $+$ signs with $-$ signs.

  The stream function $\psi^+$ is uniquely determined in $\Om^+$ by
 \begin{align}
&\psi^+_{x_1}= -\rho^+ u^+_2, \quad \psi^+_{x_2} = \rho^+ u_1^+,\label{stream}
 \end{align}
 with
 \begin{align*}
\psi^+(0,0) =0.
\end{align*}

 Define the following coordinate
 transformation of Euler-Lagrange type
 \begin{equation}\label{def-coord}
 \left\{
 \begin{array}{lll}
 y_1=x_1,\\
 y_2= \psi^+(x_1, x_2),
 \end{array}
 \right.
 \end{equation}
 so that the streamlines in $\xx$-coordinates are mapped to horizontal lines in $ \yy$-coordinates. Correspondingly, domains $\Om^+$ and $\Om^-$ are mapped into the first quadrant $\QQ^+$ and the  fourth quadrant $\QQ^-$ respectively, and  the boundaries $\GC^+$ and $\GC^-$ both become positive  $y_1$-axis.

 \begin{remark}
 Let $g^+$ be the boundary function defined in \eqref{def-gc}. The coordinate transformation \eqref{def-coord} is equivalent to
 \begin{equation}\label{def-coord2}
 \left\{
 \begin{array}{lll}
 y_1=x_1,\\
 y_2= \int_{g^+(x_1)}^{x_2} \rho^+ u_1^+(x_1, s) \dx s.
 \end{array}
 \right.
 \end{equation}
 \end{remark}

 Furthermore, this transformation is globally invertable as long as $u^+_1>0$.

 In the new coordinates $\yy=(y_1,y_2)$, the unknown variables become $U^+(\yy):=U^+(\xx(y_{1},y_{2}))$ and the Euler system,  \eqref{Euler1}--\eqref{Euler3},  becomes
 \begin{align}\label{eqn-euler1}
 \left(\frac{1}{\rho^+ u_1^+} \right)_{y_1}
 -\left(\frac{u_2^+}{u_1^+} \right)_{y_2}&= 0,\\
 \left( u_1^+ + \frac{p^+}{\rho^+ u_1^+}\right)_{y_1}
 -\left( \frac{p^+ u^+_2}{u^+_1}\right)_{y_2}&= 0, \label{eqn-euler2}\\
 (u^+_2)_{y_1} + p^+_{y_2}&= 0,\label{eqn-euler3} \\
 B^+_{y_1}&=
 0.\label{eqn-euler4}
 \end{align}

 Equations \eqref{eqn-euler4} and (2.14) imply  the Bernoulli's law:
 \begin{equation}\label{eqn-bernoulli}
 \frac{1}{2}|\uu^+|^2 + \frac{\gc p^+}{(\gc-1)\rho^+} = B^+_0.
 \end{equation}

Equations \eqref{eqn-euler1}--\eqref{eqn-euler4} also give
 \[
 (\gc \ln \rho^+ -\ln p^+)_{y_1} =0,
 \]
which, combined with (2.14), leads to
 \begin{equation}\label{eqn-rho-p}
 p^+= A^+_0 (\rho^+)^\gc.
 \end{equation}

 Define a new variable in terms of the flow angle and the reciprocal of the horizontal momentum
 \begin{equation}\label{eqn-phi-deri}
 \vv^+ =( v_1^+, v_2^+ ) := \left( \frac{u^+_2}{u^+_1} ,   \frac{1}{\rho^+
 	u^+_1}\right).
 \end{equation}
 Then \eqref{eqn-euler1} becomes
  \begin{equation}\label{eqn-v1v2}
 (v^+_2)_{y_1} -  (v^+_1)_{y_2} = 0.
  \end{equation}
It follows from \eqref{eqn-phi-deri} and \eqref{eqn-rho-p} that the Bernoulli's law can be written as
 \begin{equation}\label{eqn-rho}
 \frac{(v^+_1)^2 + 1 }{2 (v^+_2)^2} + \frac{\gc}{\gc
 	-1}A^+_0(\rho^+)^{\gc+1} = B^+_0 (\rho^+)^2.
 \end{equation}

Since the flow is subsonic, so \eqref{eqn-bernoulli} and (3.9) imply
 \begin{equation}\label{con-subsonic}
 (\rho^+)^{\gc-1} > \frac{2(\gamma-1)B^+_0}{\gamma(\gamma+1)A^+_0}.
 \end{equation}

 Solving \eqref{eqn-rho} for  $\rho^+$ as  a function of $(A_0^+,B_0^+,
 \vv^+)$ gives  two solutions, corresponding to the subsonic and supersonic solutions respectively.    Condition \eqref{con-subsonic} singles out  the unique value for  $\rho^+$ in the subsonic regime, and we denote it as
 $$\rho^+ = \rho(A_0^+, B_0^+, \vv^+). $$
Thus, the solution $U^+$ is expressed as a vector-valued function of $ \vv^+$:
 \begin{equation}\label{eqn-express-U}
 U^+= (\rho^+, u_1^+,u_2^+, p^+)^\top= \left(\rho^+, \frac{1}{\rho^+ v^+_2}, \frac{v^+_1}{\rho^+ v^+_2} , A_0^+(\rho^+)^\gc\right)^\top.
 \end{equation}
 Denote
 \begin{equation}\label{def-N}
 \begin{cases}
 u^+_2:= N^+_1( \vv^+) =N_1(A^+_0, B^+_0, \vv^+)\\
 p^+:=  N_2^+ (\vv^+)= N_2(A^+_0, B^+_0, \vv^+).
 \end{cases}
 \end{equation}
 Then \eqref{eqn-euler3} becomes
 \begin{equation}\label{eqn-N+}
 (N^+_i( \vv^+))_{y_i} =0,
 \end{equation}
where the Einstein's convention for the summation has been used.

The slip condition \eqref{RH6} on $\GC^+$ implies that
 \begin{equation}\label{Con-bd-phi+-wall}
 v_1^+ (y_1, 0) = (g^+)'(y_1),  \quad y_1 \ge 0.
 \end{equation}
Condition (2.14) on  $H^+$ gives rise to
  \begin{equation}\label{Con-bd-H+phi+}
v_2^+ (0, y_2) = \frac{1}{m_{10}^+}.
  \end{equation}

Similarly, in $\QQ^-$, one has that
 \begin{equation}\label{eqn-N-}
(N^-_i( \vv^-))_{y_i}  =0, (v^-_2)_{y_1} -  (v^-_1)_{y_2} = 0.
\end{equation}
with boundary condition
 \begin{equation}\label{Con-bd-phi--wall}
 v_1^- (y_1, 0) = (g^-)'(y_1),  \quad v_2^- (0, y_2) = \frac{1}{m_{10}^-}.
\end{equation}

On the contact discontinuity line  $\GC_T$, \eqref{RH7} becomes
 \begin{equation}\label{con-RHpN2}
N^+_2( \vv^+(y_1,0))=N^-_2( \vv^-(y_1,0)),  \quad y_1>1.
\end{equation}

Therefore, the {\bf Main Problem} is reformulated as follows:
\begin{prob}
Suppose  that $(U^+_0, U^-_0)$ and $\PP^{\pm}$ are given  in the \textbf{Main Problem}.  Find the functions   $g_T$ for the contact discontinuity curve $\GC_T$, and  $\vv^{\pm}$ such that the followings are satisfied:
\begin{enumerate}
	\item $\vv^{+}$ and $\vv^{-}$ are $C^2$ solutions to equations   \eqref{eqn-v1v2} and \eqref{eqn-N+} in $\QQ^+$ and the equations \eqref{eqn-N-} in $\QQ^-$ respectively;
	\item Boundary conditions  \eqref{Con-bd-phi+-wall},  \eqref{Con-bd-H+phi+}, \eqref{Con-bd-phi--wall}  and \eqref{con-RHpN2}
   are satisfied;
	\item At far field, the asymptotic behavior, $  \vv^{\pm} (\yy) \to (0,\frac{1}{m_{10}^{\pm}})$, $\yy\in\QQ^\pm$ and $ |\yy| \to +\infty$, is satisfied.
\end{enumerate}

\end{prob}

\section{Solution to A Fixed Boundary Value Problem in $\QQ^+$} \label{sec-solvephi}

Note that the {\bf Main Problem} in the previous section is a free boundary problem since the contact discontinuity curve $\Gamma_T$ is unknown. As we mentioned in the introduction, this free boundary problem will be solved by using the implicit function theorem. To this end, it turns out that the key step is to solve two fixed boundary value problems in $\QQ^\pm$ respectively for a given $\Gamma_T$. Since the solvability for the boundary value problem on $\QQ^-$ is similar to that for $\QQ^+$, we will focus on the solvability of the \eqref{eqn-v1v2} and \eqref{eqn-N+} with the boundary condition (3.17) and (3.18) for the given vortex line $\Gamma_T$ in $\QQ^+$. this nonlinear problem will be solved by the following three steps:
\begin{enumerate}
\item  Linearize the nonlinear fixed boundary value problem and reformulate the linearized problem as a mixed boundary value problem for a second order linear elliptic equation;
\item Solve    the reformulated linear problem;
\item Construct a map $\FF$ based on linearization problem and show the existence of a fixed point of $\FF$, which yield the solution the nonlinear problem, \eqref{eqn-v1v2},\eqref{eqn-N+} subject to the boundary condition (3.17) and (3.18) in $\QQ^+$.
 \end{enumerate}

{\bf Step 1. Linearization and reformulation.} Given $\vv^+ = \vv_0^+ + \gd \vv$, find $ \vvb^+ = \vv_0^+ + \gd \vvb$ by solving equations
 \begin{align}\label{eqn-v1bv2b}
  (\gd \vb_2)_{y_1} -  (\gd \vb_1)_{y_2} &= 0, \\
( N^+_i (\vv^+) )_{v_j^+} ( \gd \vb_{j})_{y_i} &= 0,\label{eqn-lin-gdvb}
 \end{align}
with boundary conditions
\begin{align}\label{con-bdlinvb1}
\gd \vb_1   (y_1, 0) &= (g^+)'(y_1), \quad y_1 \ge 0;  \\
 \gd \vb_2 (0, y_2) &= 0, \quad y_2 >0. \label{con-bdlinvb2}
\end{align}

Let $q^+= \sqrt{(u^+_1)^2 + (u^+_2)^2}$.
Direct calculations show
that
\begin{align}
& (N^+_1)_{v^+_1}  =\frac{u^+_1 ((c^+)^2 -( u^+_1)^2)}{(c^+)^2 -(q^+)^2},\\
\label{exp-N1phi2}& (N^+_1)_{v^+_2}  = (N^+_2)_{v^+_1} = -\frac{(c^+)^2\rho^+ u^+_1u^+_2}{(c^+)^2 -(q^+)^2},\\
& (N^+_2)_{v^+_2}  = \frac{(c^+)^2 (\rho^+)^2(q^+)^2u^+_1}{(c^+)^2 -(q^+)^2}.
\end{align}
Hence, for a subsonic flow with $u^+_1 >0$, it holds that
\begin{align*}
(N^+_1)_{v^+_1} >0,\quad (N^+_2)_{v^+_2} >0.
\end{align*}
It follows from \eqref{eqn-v1bv2b} and \eqref{eqn-lin-gdvb} that
 \begin{align}
 ( a^{ij} (\vv^+)   ( \gd \vb_1)_{y_j})_{y_i} &= 0,\label{eqn-lin-gdvb2order}
\end{align}
where
\begin{align}\label{def-aijv}
a^{11}= \frac{(N^+_1)_{v^+_1}}{(N^+_2)_{v^+_2}}, \quad a^{12}= \frac{2(N^+_1)_{v^+_2}}{(N^+_2)_{v^+_2}},\quad  a^{21}= 0, \quad a^{22}=1.
\end{align}

Then for a subsonic flow with $u^+_1>0$,
\begin{align}
a^{11} - \left(\frac{a^{12}}{2}\right)^2 & = \frac{(N^+_1)_{v^+_1}(N^+_2)_{v^+_2} -
	(N^+_1)_{v^+_2}^2}{(N^+_2)_{v^+_2}^2}  \nonumber\\
&= \frac{((c^+)^2 -(q^+)^2) ( u^+_1)^2}{(c^+)^2(\rho^+)^2(q^+)^4} >0,\label{elliptic}
\end{align}
which shows that the equation \eqref{eqn-lin-gdvb2order} is elliptic.

To recover equation \eqref{eqn-lin-gdvb} from \eqref{eqn-lin-gdvb2order}, we prescribe \eqref{eqn-lin-gdvb} on $H^+$ as a boundary condition. Since $\gd \vb_2 = 0$ on $H^+$, this condition can be written as
 \begin{align}\label{con-oblique-v1}
   a^{11} (\vv^+)  ( \gd \vb_1)_{y_1} +  a^{12} (\vv^+)  ( \gd \vb_1)_{y_2} &= 0.
 \end{align}

Subsonicity condition \eqref{M0} for $U^+_0$ and \eqref{elliptic} imply the uniform ellipticity of \eqref{eqn-lin-gdvb2order}, provided that $\vv^+$ is a small perturbation from $\vv_0^+$. More precisely, there  exist constants $\gl, \gs >0$, depending only on $U^+_0$, such that when
\[
\|\gd \vv\|_{L^{\infty}(\QQ^+)} \le \gs,
\]
for any $\boldsymbol{\xi} \in \R^2$, it holds that
\begin{align}\label{unifellip}
\gl |\boldsymbol{\xi}|^2 \le a^{ij}(  \vv^+) \xi_i\xi_j \le \frac{1}{\gl} |\boldsymbol{\xi}|^2.
\end{align}
Set
\begin{align}\label{def-GGgs}
\GG^\gs &:= \{  v:\| v \|_{2,\ga;(\gb); \QQ^+}^{(-\ga;\EE)} \le \gs \}, \\
\label{def-GGgs*} \GG^\gs_*& := \{  v: v(0,1)= \ve k^*, \| v \|_{2,\ga;(\gb); \QQ^+}^{(-\ga;\EE)} \le \gs \},
\end{align}
where
$$\EE = \{O,T\},\quad    T=(1,0).$$	
Given $\gd \vv \in \GG^\gs_* \X \GG^\gs$, we will solve equations \eqref{eqn-v1bv2b}, \eqref{eqn-lin-gdvb2order} with boundary conditions \eqref{con-bdlinvb1}, \eqref{con-bdlinvb2}, \eqref{con-oblique-v1} by $\gd \vvb \in C^{2,\ga;(\gb)}_{(-\ga;\EE)}(\QQ^+) \X C^{2,\ga;(\gb)}_{(-\ga;\EE)}(\QQ^+)$.

For the purpose of applications later, instead of solving the above problem, we consider a more general setup. Set
\[
L^+:= [0,\infty)\X\{0\}.
\]
We will solve the following mixed boundary value problem ({\bf Problem MB})
\begin{align*}
\begin{cases}
a^{ij}   v_{y_iy_j}+ b^i  v_{y_i }= f  & \text{ in }  \QQ^+,\\
v = g  &\text{ on }      L^+ ,\\
\mu^i\po_{y_i}v = g_1  &\text{ on }  H^+.
\end{cases}	
\end{align*}
{\bf Step 2. Solve the linear problem.}  Results about  existence and uniqueness of solutions to mixed boundary problems can be found in Lieberman's paper \cite{Lieberman}. However, since here the domain is unbounded and boundary data may have low regularity, so Lieberman's theorems can not be applied directly. Thus,  we may need to carry out estimates on some truncated domains. To this end, we will use the following notations. For constants $\delta$ and $R$ such that $0<\delta<1$, $R>\delta+1$, set
\begin{align}\label{def-domain}
\begin{cases}
 \QQ^R  = B_R(O)\cap \QQ^+, &\QQ^R_\gd = \QQ^R \backslash  \overline{B_\gd(O) \cup B_\gd (T)},\\   S^R:= \po B_R(O)  \cap \QQ^+,
& H^R  = \{0\} \X (0,R),\\
 L^R   = [0,R]\X  \{0\},
\end{cases}
\end{align}
where $B_R(X) $ denotes the disc centered at point $X$ with radius $R$.

\begin{figure}
	\centering
	\includegraphics[width=0.5\textwidth]{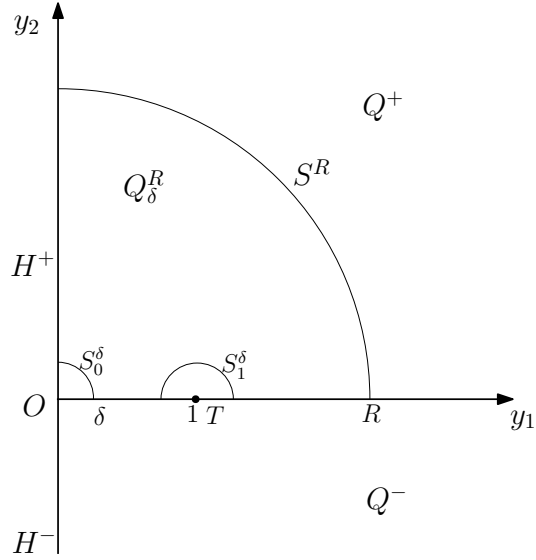}
	\protect\caption{Truncated domain in $\yy$-coordinates.\label{fig:domainy}}
\end{figure}

In order to prescribe a boundary condition on $S^R \cup L^R_\gd$, one can  extend the domain of  $(g^+)'$ from $\R^+$ to $\QQ^+$ in the following way.

Let  $K(t)$ be a smooth mollifier satisfying
\begin{align*}
K(t)\geq 0, \quad \mathrm{supp}\, K \subset [-1,1] ,  \quad \int_{\mathbb{R}}K(t)\dx t=1,
\end{align*}
and   $\eta \in C_c^{\infty}(\mathbb{R})$ be a cut-off function   with the following properties
\begin{align}\label{def-cutoff}
\begin{cases}
0 \le \eta(t)\le 1, & t\in \R ; \\
\eta(t)=1,& t\in [-1,1];\\
\eta(t)=0& t\in (-\infty, -2]\cup [2,\infty).
\end{cases}
\end{align}
Define for $(y_1,y_2)\in\overline{ \QQ^+}$
 \begin{align}\label{def-g}
g (\yy) &:=\eta(y_2)\left( \int_{0}^\infty (g^+)' (y_1 + t  y_2)	K(t) \mathrm{d}\, t+\int_{-\infty}^{0} (g^+)' (y_1 - t  y_2)	K(t) \mathrm{d}\, t\right),
 \end{align}
Then we prescribe  the following Dirichlet condition :
 \begin{align}\label{con-bd-vb1g}
 \gd \vb_1 &= g
 \end{align}
on $S^R \cup L^R$.

\begin{remark}
	The following estimate holds for $g$ defined in \eqref{def-g}:
 \begin{align}\label{est-g}
 \|g \|_{2,\ga;(\gb); \QQ^+}^{(-\ga;\EE)}  & \le C \| g^+\|_{3,\ga;( \gb-1); (1,\infty)}^{(-1-\ga ;\{0,1\})},
\end{align}	
where $C$  depends only on the choices of the kernel $K$ and the cut-off function $\eta$. We refer to the proof of Lemma 2.3 in \cite{GH} about the details  for the proof of  \eqref{est-g}.
\end{remark}

The following result for the truncated problem  in $ \QQ^R$ holds:
\begin{proposition}\label{prop-estimate-vf}
	Suppose that $a^{ij}, \mu^i ,g \in C^{2,\ga;(\gb)}_{(-\ga;\EE)}(\QQ^+)$, $b^i, g_1\in C^{1,\ga;(1+\gb)}_{(1-\ga;\EE)}(\QQ^+)$, $ f\in C^{0,\ga;( 2+\gb)}_{(2-\ga;\EE)}(\QQ^+)$ and
	\begin{align}\label{con-aij}
	\|a^{ij} -\gd^{ij} \|_{2,\ga;(\gb); \QQ^+}^{(-\ga;\EE)} \le \tau,   &&	\|\mu^i -\gd^{1i} \|_{2,\ga;(\gb); \QQ^+}^{(-\ga;\EE)} + \|b^{i}   \|_{0,\ga;(2-\gb); \QQ^+}^{(1-\ga;\EE)}  \le \tau,
	\end{align}
where $ i,j=1,2$,  $\gd^{ij} = \begin{cases}
		1 & \text{ if } \quad i=j\\
		0 &\text{ if } \quad  i\neq j
		\end{cases}.$

	Then  there exists a suitably small constant $\tau>0$, such that the following truncated mixed boundary value problem ({\bf Problem MR})
	\begin{align*}
	\begin{cases}
	a^{ij}   v^{R}_{y_iy_j}+ b^i  v^{R}_{y_i }= f  & \text{ in }  \QQ^R,\\
v^{R} = g  &\text{ on }    S^R \cup L^R,\\
\mu^i\po_{y_i}v^{R} = g_1  &\text{ on }  H^R
	\end{cases}	
	\end{align*}
	has a unique solution in $C^{2,\ga} (\QQ^R) \cap C^0 (\overline{\QQ^R})$ for any fixed positive constant   $R >4 $. Furthermore, the following estimate holds:
	\begin{align}\label{est-vf-r/2}
	\|v^{R} \|_{2,\ga;(\gb); \QQ^{\frac{R}{2}} }^{(-\ga;\EE)} \le C \left( \|f\|_{0,\ga;(2+\gb); \QQ^+}^{(2-\ga;\EE)} + \|g \|_{2,\ga;(\gb); \QQ^+}^{(-\ga;\EE)}  + \|g_1 \|_{1,\ga;(1+\gb); \QQ^+}^{(1-\ga;\EE)}\right),
	\end{align}
where the constant $C$  depends only on $\tau$, but is independent of $R$.
\end{proposition}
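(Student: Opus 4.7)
The plan is to first establish existence and uniqueness of a classical solution to \textbf{Problem MR} on the bounded domain $\QQ^R$ via Lieberman's mixed boundary value theory, and then to derive the weighted H\"{o}lder estimate \eqref{est-vf-r/2} by a localization argument that combines interior and boundary Schauder estimates on scaled balls with special corner estimates near $\EE$ and a rescaling argument in the far field, tracking every constant to be independent of $R$. Restricting the conclusion to $\QQ^{R/2}$ is essential so that no reference point of the norm lies near the artificially prescribed Dirichlet boundary $S^R$.

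For existence, the smallness hypothesis \eqref{con-aij} ensures that $a^{ij}\po_{y_iy_j}+b^i\po_{y_i}$ is uniformly elliptic on $\QQ^R$ and that $\mu^i\po_{y_i}$ is uniformly transversal to $H^R$. The boundary $\po\QQ^R$ decomposes as $(S^R\cup L^R)\cup H^R$; a Dirichlet piece meets the oblique piece only at the single corner $O=(0,0)$, while at the two corners where $S^R$ meets $L^R$ and where $S^R$ meets $H^R$ the boundary data are compatible through the smooth extension $g$ and the definition of $g_1$. I would invoke the mixed--BVP results of \cite{Lieberman} for a Dirichlet--oblique corner, together with the method of continuity along the family $a^{ij}_s=sa^{ij}+(1-s)\gd^{ij}$, $\mu^i_s = s\mu^i+(1-s)\gd^{1i}$, to produce a unique $v^R\in C^{2,\ga}(\QQ^R)\cap C^0(\overline{\QQ^R})$.

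The weighted estimate on $\QQ^{R/2}$ is then obtained by splitting the norm into three regimes according to the location of a reference point $\yy_0$. On balls centered at interior points or at smooth boundary points at distance $\gd_{\yy_0}$ from $\EE$ and distance comparable to $\GD_{\yy_0}$ from infinity, standard interior/boundary Schauder estimates on balls of radius $\tfrac14\min\{\gd_{\yy_0},\GD_{\yy_0}\}$ produce the weights $\gd_{\yy_0}^{\max\{2-\ga,0\}}\GD_{\yy_0}^{\gb+2}$ on $|D^2v^R(\yy_0)|$. At the corner points $O$ and $T$, a dyadic annular decomposition $A_k=\{2^{-k-1}\le|\yy-P|\le 2^{-k}\}$ and rescaling to unit scale reduce the problem to a small perturbation of the Laplacian on a fixed wedge, and the Lieberman mixed-corner Schauder estimate at $O$ (respectively the Dirichlet corner estimate at $T$) delivers the required $\gd_{\yy_0}^{\max\{k-\ga,0\}}$ factor. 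In the far field, rescaling by $|\yy_0|$ on a ball of radius $|\yy_0|/4$ turns the problem into one with small coefficients and small inhomogeneity, by virtue of the weights $\GD_\yy^{2-\gb}$ on $b^i$ and $\GD_\yy^{2+\gb}$ on $f$, and the Schauder estimate at unit scale then yields the $\GD_{\yy_0}^{\gb+k+\ga}$ factor uniformly in $|\yy_0|$. Summing the three contributions and interpolating lower-order semi-norms gives \eqref{est-vf-r/2}.

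The main obstacle will be the corner estimate at $O$, where a Dirichlet and an oblique condition meet and $v^R$ is only $C^\ga$ up to the corner: to close the dyadic rescaling one must produce an $\ga$-H\"{o}lder modulus of continuity that is stable under zooming in. I would achieve this by constructing an explicit polynomial barrier of the form $w(\yy)=K|\yy|^\ga\,\vf(\theta)$ in polar coordinates about $O$, with $\vf$ a positive eigenfunction of the angular part of the Laplacian on the quarter plane satisfying the Dirichlet--oblique boundary conditions, and $K$ chosen to dominate the norms of $f$, $g$ and $g_1$; the positivity of $\mu^1$ and the matching condition $g(0,0)=0$ are what make such a barrier available. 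A parallel but simpler barrier built from $|\yy-T|^\ga$ handles $T$, where only Dirichlet data are imposed. Uniqueness on $\QQ^R$ follows from applying the estimate to the difference of two solutions with zero data, and the $R$-independent constant in \eqref{est-vf-r/2} is precisely what will enable the diagonal passage to the limit $R\to\infty$ when the result is used later to solve the original problem on all of $\QQ^+$.
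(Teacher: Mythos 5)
Your overall architecture --- Lieberman for existence, scaled Schauder for the weighted seminorms, and angular barrier functions at the corners --- matches the paper's proof in spirit. But there is a real gap in the far-field regime.

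You claim that ``rescaling by $|\yy_0|$ on a ball of radius $|\yy_0|/4$ turns the problem into one with small coefficients and small inhomogeneity \dots and the Schauder estimate at unit scale then yields the $\GD_{\yy_0}^{\gb+k+\ga}$ factor uniformly in $|\yy_0|$.'' This does not follow. A rescaled Schauder estimate on $B(\yy_0,|\yy_0|/4)$ controls $|\yy_0|^{\,k}\,|D^k v^{R}(\yy_0)|$ by $\sup_{B(\yy_0,|\yy_0|/2)}|v^{R}|$ plus weighted norms of $f,g,g_1$. Decay of the data \emph{cannot}, by itself, force decay of the solution: if you only know $v^{R}$ is bounded in $\overline{\QQ^{R}}$, you obtain $|D^2 v^{R}(\yy_0)|\lesssim |\yy_0|^{-2}$, short of the required $|\yy_0|^{-2-\gb}$, and nothing in your argument rules that out. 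Worse, the only a priori $C^0$ control you have near $S^R$ is through the artificial Dirichlet data $g$, so without a global decay mechanism the constant will degenerate as $R\to\infty$. You have correctly identified that a barrier is needed at the corner $O$; you have omitted the corresponding barrier at infinity, and that omission is fatal to the $R$-independence of \eqref{est-vf-r/2}.

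The missing step is a global comparison argument. The paper constructs a supersolution of the form
\begin{align*}
\vf(\yy) = C\kappa\, r^{-\gb}\sin(\gb_1\theta+\theta_0),\qquad \gb<\gb_1<\ga,\ \theta_0>0,\ \gb_1\pi/2+\theta_0<\pi/2,
\end{align*}
verifies by direct computation that $a^{ij}\vf_{y_iy_j}+b^i\vf_{y_i}\leq f$ in $\QQ^R$, $\vf\geq g$ on $S^R\cup L^R$, and $\mu^i\po_{y_i}\vf\leq g_1$ on $H^R$ (using the smallness $\tau$ in \eqref{con-aij}), and then applies the maximum principle together with the sign of the oblique derivative on $H^R$ to conclude $|v^{R}(\yy)|\leq C\kappa\, r^{-\gb}$ uniformly in $R$. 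A second, bounded barrier near the origin gives the complementary bound $|v^{R}|\leq C\kappa$ on $\QQ^2$. Only after this $C^0$ decay estimate is in hand do the scaled interior/boundary Schauder estimates on $B_{2d^*}(\yy_0)$, $d^*\sim\dist(\yy_0,\EE)$, deliver the weighted $C^{2,\ga}$ bound with the $\GD_{\yy_0}$-factor you want. You should insert this barrier step before the localization; the rest of your outline (corner barrier $K|\yy|^{\ga}\vf(\theta)$ at $O$, a Dirichlet barrier at $T$, and uniqueness by applying the estimate to a difference) then goes through essentially as in the paper.
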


\begin{proof}[Proof of Proposition \ref{prop-estimate-vf}]

By Theorem 1 in Lieberman's paper \cite{Lieberman}, there is a unique solution $v^R$ in $C^2(\QQ^R) \cap C^{\ga'} (\overline{\QQ^R}) $ for some $\ga' >0$ to {\bf Problem MR}. Next, we will derive some estimates independent of $R$ so that one can let $R \to \infty$ to obtain the solution to {\bf Problem MB}.
	
 Set
\[
\kappa:=   \|f\|_{0,\ga;(2+\gb); \QQ^+}^{(2-\ga;\EE)} + \|g \|_{2,\ga;(\gb); \QQ^+}^{(-\ga;\EE)}  + \|g_1 \|_{1,\ga;(1+\gb); \QQ^+}^{(1-\ga;\EE)}.
\]
{\bf Decay estimates.} { Since $b^i, f, g$ and $g_1$ may have lower regularities  at  $\EE$}, it is convenient to use    the polar coordinates    $(r, \theta)$  centered at $O$  and $(r^*, \theta^*)$ centered at $T$  respectively  to construct barrier functions. Define
 \begin{align}
\vf_1(\yy) & :=r^{-\gb}\sin (\gb_1\theta + \theta_0), \label{def-vf1}\\
\vf(\yy)&: = C\kappa  \vf_1(\yy),\label{def-vf}
\end{align}
where $\gb < \gb_1 <\alpha, \theta_0>0, \gb_1 \pi/2 +\theta_0 < \pi/2$, here and hereafter, all  $C$'s and $C_i$'s, $i=1,2,\cdots$, denote generic positive constants, which may depend on $U_0^{\pm},\zeta^{\pm},\ga,\gb$.

Direct calculations show that
 \begin{align}
\GD \vf_1(\yy) &= (\gb^2- \gb_1^2)r^{-\gb-2}\sin (\gb_1\theta + \theta_0), \label{barrier1}\\
( \vf_1)_{y_1}(0,y_2 )& = - r^{-1}\po_{\theta}\vf_1|_{\theta=\frac{\pi}{2}} =  - \gb_1  r^{-\gb-1}\cos (\gb_1\pi/2 + \theta_0).\label{ND}
 \end{align}
 Let $\tau$ be suitably small and choose proper $C$. Then it follows from \eqref{con-aij} and \eqref{def-vf1}-\eqref{ND} that
 \begin{align*}
 \begin{cases}
 a^{ij}   \vf_{y_iy_j}+ b^i \vf_{y_i }\le f & \text{ in }  \QQ^R,\\
\vf  \ge g  & \text{ on }    S^R \cup L^R,\\
\mu^i\po_{y_i}\vf  \le g_1  & \text{ on }  H^R.
\end{cases}
\end{align*}
Consequently, the standard maximum principle for elliptic equations shows that $\varphi-v^{R}$ cannot achieve a minimum in $\QQ^R$.  Since $\mu^i\po_{y_i}(\vf-v^{R}) < 0$ on $H^R$ and $(\mu^1,\mu^2)\cdot(1,0)>0$ for suitably small $\tau$, so $\varphi-v^{R}$ cannot achieve a minimum on $H^R$ neither. It thus follows that
\begin{align}\label{est-vf-max}
v^{R} \le \vf (\yy) .
\end{align}
Similarly, one can check that $-\varphi$ is a strict sub-solution to {\bf Problem MR}, thus it holds that
\begin{align}\label{est-vf-min}
v^{R} \ge -\vf (\yy) .
\end{align}
It follows from \eqref{est-vf-max}, \eqref{est-vf-min} and \eqref{def-vf} that
\begin{align}\label{est-vf-bdd}
|v^{R} (\yy) |\le \vf (\yy) \le C \kappa r^{-\gb}.
\end{align}

{\bf $C^0$ estimate.} It follows from \eqref{est-vf-bdd} that
$$|v^{R} (\yy)| \le C \kappa,  \quad \text{ on }  S^2.$$
In $\QQ^2$, one can define a barrier function as
\[
\vf_2  := C \kappa  (r^{\gb_2}\sin (\gb_1\theta + \theta_0) + {(r^*)}^{\gb_2}\sin (\gb_1\theta^{*} + \theta^{*}_0) )
\]
with $\gb_1$ and $\theta_0$ are given as in \eqref{def-vf1}, $0<\gb_{2}<\gb_{1}, \theta^{*}_0>0$, and $\beta_{1}\pi+\theta^{*}_0<\pi.$ Then this enables one to get
\[
|v^{R} (\yy)| \le \vf_2 (\yy) \le C\kappa \quad \text{ for }   \yy \in \QQ^2.
\]
The estimate above and estimate \eqref{est-vf-bdd} yield the following $C^0$ estimate for $v^{R}$ in $\QQ^R$:
 \begin{align}\label{est-vfRgdC0}
 |v^{R} (\yy)| \le  C\kappa \min(1, r^{-\gb}).
 \end{align}

{\bf Weighted  $C^{2,\ga}$ estimates on $\QQ_{\frac{1}{4}}^{\frac{R}{2}} $.} Choosing proper scalings and applying Schauder estimates will lead to the desired estimate \eqref{est-vf-r/2}. Indeed, for any $\yy_0$ in $\QQ_{\frac{1}{4}}^{\frac{R}{2}} $, let $d^*= \frac{1}{8} \dist (\yy_0, \EE)$, $\DD_1 = B_{d^*}(\yy_0) \cap \QQ^+ $, $\DD_2 = B_{2d^*}(\yy_0) \cap \QQ^+ $ and $\CC = \po \DD_2 \backslash \po \QQ^+ $.   The Schauder interior estimate  of Theorem 6.2 in \cite{gt} on the domain $\DD_2$  gives rise to
\begin{align}\label{est-vRgd1}
\|v^{R} \|_{2,\ga; \DD_2 }^{(0;\CC)} \le C (\|v^{R} \|_{0,0; \DD_2 } + \|f \|^{(2;\CC)} _{0,\ga; \DD_2 }),
\end{align}
here $\|v^{R}\|_{0,0;\DD_2}=\sup_{x\in \DD_2}|v^{R}(x)|$ and $C$ depends  only on $\alpha, \lambda$.

Near   $S^{R}\cup L^{R}$, by the boundary estimate of Lemma 6.4 in \cite{gt} on the domain $\DD_{2}$,  one can get 
\begin{align}\label{est-vRgd2}
\|v^{R} \|_{2,\ga; \DD_2 }^{(0;\CC)} \le C (\|v^{R} \|_{0,0; \DD_2 } +\|g\|_{2,\ga; \DD_2 }^{(0;\CC)}+ \|f \|^{(2;\CC)} _{0,\ga; \DD_2 })
\end{align}
where  $C$ does not depend on $R$.

Near $H^{R}$,  applying the Theorem 6.26 in \cite{gt} on the domain $\DD_{2}$ yields
\begin{align}\label{est-vRgd3}
\|v^{R} \|_{2,\ga; \DD_1 }^{(0;\CC)} \le C (\|v^{R} \|_{0,0; \DD_2 } +\|g_{1}\|_{1,\ga; \DD_2 }^{(1;\CC)}+ \|f \|^{(2;\CC)} _{0,\ga; \DD_2 })
\end{align}
here  $C$ is independent  of $R$.

{ Since $\xx,\yy\in\DD_{2}$, $\frac{1}{4}\le\delta_{\xx},\delta_{\yy}\le 1$, then
\begin{equation}\label{f-norm}
\begin{aligned}
 \|f \|^{(2;\CC)} _{0,\ga; \DD_2 }&=\sup_{\xx\in \DD_{2}}d^{2}_{\xx}|f(\xx)|+\sup_{\xx,\yy\in\DD_{2}; \xx\ne \yy}d_{\xx,\yy}^{2+\alpha}\frac{|f(\xx)-f(\yy)|}{|\xx-\yy|^{\alpha}}\\
 &\le C |\yy_{0}|^{-\gb}\left(\sup_{\xx\in \DD_{2}}d^{2}_{\xx}(d^{*})^{\gb}|f(\xx)|+\sup_{\xx,\yy\in\DD_{2}; \xx\ne \yy}d_{\xx,\yy}^{2+\alpha}(d^{*})^{\gb}\frac{|f(\xx)-f(\yy)|}{|\xx-\yy|^{\alpha}}\right)\\
 &\le C |\yy_{0}|^{-\gb}\|f\|_{0,\ga;(2+\gb);\QQ^{+}}^{2-\alpha;\EE}\le C\kappa |\yy_{0}|^{-\beta}
\end{aligned}
\end{equation}
where $d_{\xx}=\dist\{\xx,\CC\}$ and $d_{\xx,\yy}=\min\{d_{\xx}, d_{\yy}\}$. 
Similarly, we infer that 
\begin{equation}\label{g-g1-norm}
\|g\|_{2,\ga; \DD_2 }^{(0;\CC)}\le  C\kappa |\yy_{0}|^{-\beta}; \|g_{1}\|_{1,\ga; \DD_2 }^{(1;\CC)}\le C\kappa |\yy_{0}|^{-\beta}.
\end{equation}}

For any $y_0 \in \QQ_{\frac{1}{4}}^{\frac{R}{2}}$,  from \eqref{est-vRgd1} to \eqref{est-vRgd2}, we have
\begin{equation}\label{est-y0}
\begin{aligned}
&\sum_{|\kk|=0}^{2}(d^{*})^{|\kk|}|D^{\kk}v^{R}(\yy_{0})|+\sup_{
	\begin{array}{c}
	\yy\in \DD_{1},\yy \ne \yy_{0}\\
	 |\kk|=2
	\end{array}}{(d_{\yy,\yy_{0}})^{2+\alpha}}\frac{|D^{\kk}v^{R}(\yy)-D^{\kk}v^{R}(\yy_{0})|}{|\yy-\yy_{0}|^{\alpha}}\\
\le{}&  C (\|v^{R} \|_{0,0; \DD_2 } +\|g\|_{2,\ga; \DD_2 }^{(0;\CC)}+\|g_{1}\|_{1,\ga; \DD_2 }^{(1;\CC)}+ \|f \|^{(2;\CC)} _{0,\ga; \DD_2 }).
\end{aligned}
\end{equation}

Therefore,  estimates \eqref{est-vfRgdC0}, \eqref{est-y0}, \eqref{f-norm} and \eqref{g-g1-norm} yield
\begin{equation}\label{est-y02}
\begin{aligned}
\sum_{|\kk|=0}^{2}(d^{*})^{|\kk|}|D^{\kk}v^{R}(\yy_{0})|&+\sup_{
	\begin{array}{c}
	\yy\in \DD_{1},\yy \ne \yy_{0}\\
	 |\kk|=2
	\end{array}}{(d_{\yy,\yy_{0}})^{2+\alpha}}\frac{|D^{\kk}v^{R}(\yy)-D^{\kk}v^{R}(\yy_{0})|}{|\yy-\yy_{0}|^{\alpha}}\\
&\le  C\kappa |\yy_{0}|^{-\beta}.
\end{aligned}
\end{equation}

This leads to the following estimate:
\begin{align}\label{est-vR14}
\|v^{R} \|_{2,\ga;(\gb);\QQ_{\frac{1}{4}}^{\frac{R}{2}} }\le C \kappa.
\end{align}

{\bf Corner estimates near $O$ and $T$.} Consider domain $\QQ^{\frac{1}{2}}$ for the corner estimates near $O$.  We use the following barrier function 
\begin{align*}
\bar{\vf} (\yy):= C\kappa r^{\ga}\sin (\gb_3\theta + \theta_0) \quad (\ga<\gb_3 <1)
\end{align*}
for $\vb^R(\yy):= v^R(\yy) - v^R(0,0)$ and obtain
\begin{align}\label{est-vbRc0}
|\vb^R(\yy)|\le C \kappa r^{\ga},  \quad \forall \yy \in \QQ^{\frac{3}{4}}.
\end{align}

Using the Schauder estimates derived in the same way for \eqref{est-y0} and the same scaling argument as for \eqref{est-y02}, one gets that
\begin{align}\label{est-vbR12prime}
\sum_{|\kk|=0}^{2}(d^{*})^{|\kk|}|D^{\kk}\vb^{R}(\yy_{0})|+\sup_{	\begin{array}{c}
	\yy\in \DD_{3},\yy \ne \yy_{0}\\
	 |\kk|=2
	\end{array}}(\tilde{d}_{\yy,\yy_{0}})^{2+\alpha}\frac{|D^{\kk}\vb^{R}(\yy)-D^{\kk}\vb^{R}(\yy_{0})|}{|\yy-\yy_{0}|^{\alpha}} \le C  \kappa|\yy_0|^{\ga},
\end{align}
for any $\yy_0\in \QQ^{\frac{1}{2}} $, where  $\DD_3 = B_{\frac{|\yy_0|}{4}}(\yy_0), \tilde{d}_{\yy,\yy_{0}}=\min\{dist(\yy,\partial\DD_{3}), dist(\yy_{0},\partial\DD_{3})\}$. Thus, one can get the following corner estimate:
\begin{align}\label{est-vRcorner1}
\|\vb^R \|^{(-\ga;\{O\})}_{2,\ga; \QQ^{\frac{1}{2}} } \le C  \kappa.
\end{align}
Similarly, the following corner estimate near $T$ holds:
\begin{align}\label{est-vRcorner2}
\|\vb^R \|^{(-\ga;\{T\})}_{2,\ga; B_{\frac{1}{2}}(T)\cap \QQ^+ } \le C  \kappa.
\end{align}
Estimate \eqref{est-vf-r/2} follows from  estimates \eqref{est-vR14}, \eqref{est-vRcorner1} and  \eqref{est-vRcorner2}. Hence, the proof of Proposition \ref{prop-estimate-vf} is completed.

\end{proof}

By estimate \eqref{est-vf-r/2}, one can choose a   subsequence $\{ v^{R_i}  \}$ converging to $v$ in each space $C_{(-\ga';\EE)}^{2,\ga';(\gb)} (\QQ^{\frac{R_i}{2}} )$ for a fixed $\ga' \in (0,\ga)$, as $ R_i \to \infty$.  Then  \eqref{est-vf-r/2} implies
	\begin{align}\label{est-v}
	\|v \|_{2,\ga;(\gb); \QQ^+ }^{(-\ga;\EE)} \le C \left( \|f\|_{0,\ga;(2+\gb); \QQ^+}^{(2-\ga;\EE)} + \|g \|_{2,\ga;(\gb); \QQ^+}^{(-\ga;\EE)}  + \|g_1 \|_{1,\ga;(1+\gb); \QQ^+}^{(1-\ga;\EE)}\right).
	\end{align}
Obviously, $v$ is a solution to  the {\bf Problem MB}.

For the  uniqueness in space $C^{2,\ga;(\gb)}_{(-\ga;\EE)}(\QQ^+)$, it suffices to show that there is only a trivial solution in $C^{2,\ga;(\gb)}_{(-\ga;\EE)}(\QQ^+)$ to the {\bf Problem MB} with $f=g=g_1=0$. Define  a barrier function as
\begin{align}\label{def-vf4}
\vf_3 (\yy):=  c_0 \left(C_3 r^{-\gb'}\sin (\gb_1\theta + \theta_0) +  \eta(4r^* )  (r^*)^{-\gb}\sin (\gb_1\theta^{*} + \theta^{*}_0)  \right),
\end{align}
where $0 < \gb' < \gb$, { $\gb_1, \theta_0$ are given in the definitions \eqref{def-vf1} and $r^*,\theta^{*}, \theta^{*}_0 $  are the same as for the definition of $\vf_{2}$,  $\eta(t)$ is  defined in \eqref{def-cutoff}},   $c_0$ is a  positive constant which will approach to 0 later.

Since $\gb' < \gb$ implies that $v$ decays faster than $\vf_3$ as $|\yy| \to +\infty$, for large enough $R$, then
\[
|v(\yy)| \le \vf_3(\yy), \quad \text{ for }   \yy \text{ on }  S^R.
\]
As for the calculations of \eqref{barrier1} and \eqref{ND}, one can check that $\vf_3$ is  a  super-solution  of the following equation
\begin{align*}
	\begin{cases}
	a^{ij}   v_{y_iy_j}+ b^i  v_{y_i }= 0  & \text{ in }  \QQ^R,\\
v = 0 &\text{ on }    S^R \cup L^R,\\
\mu^i\po_{y_i}v = 0  &\text{ on }  H^R,
	\end{cases}	
	\end{align*}
provided that $\tau$ is suitable small.

As for \eqref{est-vf-bdd}, one shows that  $\vf_3 - v$ can achieve the minimun neither in $\QQ^R$ nor on $H^{R}.$  Note that $v$ is bounded and $\vf_3(\yy) \to +\infty$ as $\yy \to O$ or $T$.  Hence $\vf_3 - v$ can achieve minimum only on $S^R \cup (0,1)\X \{0\} \cup (1,R)\X \{0\}$ and the minimum is nonnegative. Therefore, we conclude that
\[
|v(\yy)| \le \vf_3(\yy), \quad \text{ for }   \yy \in \QQ^R.
\]
Thus, letting $R \to \infty$ yields that
\[
|v(\yy)| \le \vf_3(\yy)  , \quad \text{ for }   \yy \in \QQ^+.
\]
Let $c_{0}\to 0$, we conclude that $v\equiv 0$ in $\QQ^+$.
\begin{remark}
In fact, it follows from  Proposition \ref{prop-estimate-vf} that the solution is H\"{o}lder continuous  at the corners $O$ and $T$. Since the {\bf Problem MB} is linear, it is easy to see the uniqueness. The proof above shows that  the uniqueness holds even in the case   that the bounded solutions may be singular at the corners $O$ and $T$.  
\end{remark}

 The results discussed above  can be summarized in the following proposition:
\begin{proposition}\label{prop-estimate-gdvb1}
	Under the same assumptions as in Proposition \ref{prop-estimate-vf}, there exists a  solution $v$ to the {\bf Problem MB} satisfying the estimate \eqref{est-v}.  Furthermore, the solution is unique in $C^{2,\ga;(\gb)}_{(-\ga;\EE)}(\QQ^+)$.
\end{proposition}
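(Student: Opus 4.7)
The plan is to assemble the proposition from the estimates on truncated domains already established in Proposition \ref{prop-estimate-vf}, together with a global barrier argument for uniqueness; essentially, everything has been prepared and we only need to pass to the limit $R\to\infty$ and rule out nontrivial homogeneous solutions.

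\textbf{Existence.} For each $R>4$, Proposition \ref{prop-estimate-vf} provides a solution $v^R\in C^{2,\ga}(\QQ^R)\cap C^0(\overline{\QQ^R})$ to \textbf{Problem MR}, satisfying the $R$\emph{-independent} bound \eqref{est-vf-r/2} on the truncated domain $\QQ^{R/2}$. First I would fix $\ga'\in(0,\ga)$ and use the compact embedding $C^{2,\ga;(\gb)}_{(-\ga;\EE)}(\QQ^{R/2})\hookrightarrow C^{2,\ga';(\gb)}_{(-\ga';\EE)}(\QQ^{R/2})$ on each fixed truncated subdomain, combined with a standard diagonal extraction along a sequence $R_i\to\infty$, to obtain a limit $v\in C^{2,\ga';(\gb)}_{(-\ga';\EE)}(\QQ^+)$ satisfying the PDE in $\QQ^+$, the Dirichlet condition on $L^+$, and the oblique condition on $H^+$. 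The uniform bound \eqref{est-vf-r/2} passes to the limit $v$ via lower semicontinuity of the weighted H\"older norms (taking a supremum over finitely many points), yielding a limit function in the stronger space $C^{2,\ga;(\gb)}_{(-\ga;\EE)}(\QQ^+)$ with estimate \eqref{est-v}.

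\textbf{Uniqueness.} Let $v_1,v_2\in C^{2,\ga;(\gb)}_{(-\ga;\EE)}(\QQ^+)$ be two solutions of \textbf{Problem MB}; their difference $v:=v_1-v_2$ solves the homogeneous problem with $f=g=g_1=0$ and satisfies $|v(\yy)|\le K\GD_\yy^{-\gb}$ for some finite $K$. The idea is to dominate $v$ by the barrier $\vf_3$ of \eqref{def-vf4}, which decays only like $r^{-\gb'}$ with $0<\gb'<\gb$ at infinity but blows up near the corners $\EE=\{O,T\}$ (because of the cut-off $\eta(4r^*)$ and the analogous singular factor at $O$). By the same computations as in \eqref{barrier1}--\eqref{ND}, one verifies that, provided $\tau$ is small enough, $\vf_3$ is a strict supersolution of the elliptic equation in $\QQ^R$ and of the oblique boundary operator on $H^R$. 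Because $\gb'<\gb$, for every fixed $c_0>0$ one has $|v|\le\vf_3$ on $S^R$ for all sufficiently large $R$; on $L^R\setminus\EE$ we have $v=0\le\vf_3$; and the corner singularities of $\vf_3$ prevent $\vf_3-v$ from attaining an interior minimum near $O$ or $T$. The standard maximum principle, together with the Hopf-type argument on $H^R$ used in the proof of Proposition \ref{prop-estimate-vf}, then forces $\vf_3-v\ge 0$ and $\vf_3+v\ge 0$ in $\QQ^R$. Sending $R\to\infty$ and then $c_0\to 0^+$ (which is legitimate since all constants in the construction of $\vf_3$ are independent of $c_0$) gives $v\equiv 0$ in $\QQ^+$.

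\textbf{Main obstacle.} The delicate point is the uniqueness step: one must construct a single barrier $\vf_3$ that simultaneously (i) dominates any admissible solution in the weighted class at infinity, which forces the exponent choice $\gb'<\gb$ so that $\vf_3$ decays \emph{slower} than $v$ and beats it on $S^R$ for large $R$; (ii) produces a strict \emph{super}solution of the oblique operator $\mu^i\po_{y_i}$ on $H^R$, which is why we use $\sin(\gb_1\theta+\theta_0)$ with an angular offset $\theta_0>0$ rather than a harmonic power; and (iii) remains admissible near the corners so that the minimum cannot be attained there. The passage to the limit in $R$ and then in $c_0$ has to be performed in this order — otherwise one cannot kill the $c_0 r^{-\gb'}$ term at finite points of $\QQ^+$. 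Once these three features are balanced by the choices $0<\gb'<\gb<\gb_1<\ga$, $\theta_0>0$ with $\gb_1\pi/2+\theta_0<\pi/2$ (and the analogous conditions at $T$), the rest is routine.
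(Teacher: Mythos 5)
Your proposal is correct and follows essentially the same route as the paper: existence by diagonal extraction of the truncated solutions $v^R$ using the $R$-independent bound \eqref{est-vf-r/2} and lower semicontinuity of the weighted norms, and uniqueness via the barrier $\vf_3$ of \eqref{def-vf4} with $0<\gb'<\gb$, the maximum principle with a Hopf-type argument on $H^R$, and passing to the limit first in $R$ and then in $c_0$. One minor imprecision: the blow-up of $\vf_3$ at the corners comes from the factors $r^{-\gb'}$ at $O$ and $(r^*)^{-\gb}$ at $T$, not from the cut-off $\eta(4r^*)$ (which merely localizes the second term near $T$).
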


We now apply Proposition \ref{prop-estimate-gdvb1}  to solve the boundary value problem \eqref{eqn-lin-gdvb2order}, \eqref{con-bdlinvb1}  and \eqref{con-oblique-v1}. Though the coefficients in \eqref{eqn-lin-gdvb2order}  and \eqref{def-aijv} may not satisfy the assumption \eqref{con-aij} directly, yet  by a suitable rescaling of $y_1$ (depending only on the background solution), one may reduce the problem \eqref{eqn-lin-gdvb2order}, \eqref{con-bdlinvb1}  and \eqref{con-oblique-v1}  to the following problem
\begin{align*}
\begin{cases}
\widetilde{a^{ij}} v_{y_{ij}} + \widetilde{b^i}v_{y_i}=0 & \text{in} \quad \QQ^+\\
v=g\equiv (g^+)' & \text{on}\quad \LL^+\\
\widetilde{\mu^i}\partial_{y_i}v=0 &\text{ on }  H^+
\end{cases}
\end{align*}
with
\begin{align*}
\widetilde{a^{11}}=\frac{a^{11}}{a^{11}(v^+_0)}, \quad \widetilde{a^{12}}=\frac{a^{12}}{\sqrt{a^{11}(v^+_0)}}, \quad \widetilde{a^{21}}=0, \quad \widetilde{a^{22}}=1\\
\widetilde{b_i}=\partial_{y_j} (\widetilde{a^{ji}}), \quad \widetilde{\mu^1}=\frac{a^{11}}{a^{11}(v^+_0)}, \quad \widetilde{\mu^2}=\frac{a^{12}}{\sqrt{a^{11}(v^+_0)}}.
\end{align*}

It can be checked easily that the coefficients above satisfy \eqref{con-aij}, so one can apply Proposition \ref{prop-estimate-gdvb1}  to conclude that there exists a unique solution $\gd \vb_1$ to the problem \eqref{eqn-lin-gdvb2order}, \eqref{con-bdlinvb1}  and \eqref{con-oblique-v1}, such that
\begin{align}\label{est-gdvb1}
	\|\gd \vb_1 \|_{2,\ga;(\gb); \QQ^+ }^{(-\ga;\EE)} \le C    \|g \|_{2,\ga;(\gb); \QQ^+}^{(-\ga;\EE)} \le C	\|(g^+)' \|_{2,\ga;(\gb); \R^+}^{(-\ga;\{0,1\})}.
\end{align}

Set
\begin{align}\label{exp-gdvb2}
\gd \vb_2 (\yy) = \int_0^{y_1} (\gd \vb_1)_{y_{2}}(t, y_2) \,dt, \quad \forall \, \yy \in \QQ^+.
\end{align}
Then the equation \eqref{eqn-v1bv2b} and the boundary condition \eqref{con-bdlinvb2} are satisfied. Next, we derive the super-norm estimate on $\gd\vb_2$.

{\bf Boundedness of $\gd \vb_2$.}
The right hand side of \eqref{exp-gdvb2} is integrable for each fixed $y_2 >0$, which implies that $\gd \vb_2 $ is bounded for each fixed $y_{2}>0$. However, the bounds are not uniform in $y_2$, due to less regularity of the solutions  at $O$ and $T$. Observe that for any given point $\yy^* \in \overline{\QQ^+}\backslash \EE$, and any smooth curve $\CC$ starting from $(0,1)$, ending at $\yy^*$, with parametrization $\yy(t), t\in [0,1]$, it holds that:
\begin{align}\label{eqn-vb2integral}
\gd \vb_2 (\yy^*) = \int_0^1 \grad (\gd \vb_2) (\yy(t))\cdot \yy'(t) \,\dx t.
\end{align}
It follows from \eqref{eqn-v1bv2b}, \eqref{eqn-lin-gdvb} and \eqref{est-gdvb1} that
 \begin{align}\label{est-vb2higher}
| \grad (\gd \vb_2 (\yy) ) |\le C | \grad (\gd \vb_1 (\yy) ) | \le C 	\|(g^+)' \|_{2,\ga;(\gb); \R^+}^{(-\ga;\{0,1\})}  |\yy|^{-1-\gb}.
 \end{align}
 Then for any $\yy$ away from $\EE$ (with distance greater than $\frac{1}{2}$), \eqref{eqn-vb2integral} leads to
 \begin{align}\label{est-gdvb2bd}
  | \gd \vb_2 (\yy)   |\le   C 	\|(g^+)' \|_{2,\ga;(\gb); \R^+}^{(-\ga;\{0,1\})} \quad \forall \, \yy \in \QQ^+, \ \dist(\yy,\EE) \ge \frac{1}{2}.
 \end{align}
For $\dist(\yy,\EE) < \frac{1}{2} $, \eqref{est-gdvb1} yields the following possible blowup rate for $\grad (\gd \vb_1 (\yy) )$:
 \begin{align*}
| \grad (\gd \vb_1 (\yy) ) |\le  C 	\|(g^+)' \|_{2,\ga;(\gb); \R^+}^{(-\ga;\{0,1\})}  \gd_{\yy}^{-1+\ga}.
\end{align*}
Hence,  the following  boundedness for  $\gd \vb_2 (\yy) $ holds:
\begin{align}\label{est-gdvb2log}
| \gd \vb_2 (\yy)   |\le   C 	\|(g^+)' \|_{2,\ga;(\gb); \R^+}^{(-\ga;\{0,1\})} , \quad \forall \, \yy \in \QQ^+, \ \dist(\yy,\EE) < \frac{1}{2}.
\end{align}

%
It follows that
 \begin{align}\label{est-phibar}
 \| \gd \vvb \|_{L^{\infty}(\QQ^{+})}  \le  C 	\|(g^+)' \|_{2,\ga;(\gb); \R^+}^{(-\ga;\{0,1\})} .
\end{align}

{\bf Decay estimates of $\gd \vb_2$.} To guarantee $\gd \vb_2 \in \GG^\gs$, besides estimates \eqref{est-phibar}, one needs to derive the decay of $\gd \vb_2 $ at far field. In the similar way as solving for $\gd \vb_1$ in {\bf Step 1}, one differentiates equation \eqref{eqn-v1bv2b} w.r.t. $y_1$, equation \eqref{eqn-lin-gdvb} w.r.t. $y_2$ to eliminate $\gd \vb_1$ and obtain the equation for $V:=\gd\vb_{2}$ as
\begin{align}
( \tilde{a}^{ij} (\vv^+)   V_{y_i})_{y_j} &= 0,\label{eqn-lin-V}
\end{align}
where
\begin{align*}
\tilde{a}^{11}=1 , \quad \tilde{a}^{12}= \frac{2(N^+_1)_{v^+_2}}{(N^+_1)_{v^+_1}},\quad  \tilde{a}^{21}= 0, \quad \tilde{a}^{22}=\frac{(N^+_2)_{v^+_2}}{(N^+_1)_{v^+_1}}.
\end{align*}
The boundary conditions can be prescribed as  follows:
\begin{align} \label{con-oblique-v2}
\begin{cases}
V=0  & \text{on } H^+,\\
V= \gd \vb_2  & \text{on }  (0,1)\X \{0\},\\
\tilde{a}^{12} (\vv^+) V_{y_1} +  \tilde{a}^{22} (\vv^+) V_{y_2}  = (g^+)'' & \text{on }   ( 1, \infty)\X \{0\}.
\end{cases}
\end{align}
Since  $\gd \vb_1$ and  $\gd \vb_2$ solve the equations \eqref{eqn-v1bv2b} and \eqref{eqn-lin-gdvb}, it is easy to see that $\gd\vb_{2}$  also solves the equations \eqref{eqn-lin-V} and \eqref{con-oblique-v2}. 
We  define the following barrier function, similar to $\vf_3$ in \eqref{def-vf4}:
 \begin{equation}
 \begin{aligned}\label{def-vf5}
 \vf_4 (\yy):=&{}  \ve  (C_4r^{\gb'}\sin (\gb_1\theta + \theta_0)  + \eta(4r )  r^{-\gb'}\sin (\gb_1\theta + \theta_0) \\
 &{} +  \eta(4r^* )  (r^*)^{-\gb'}\sin (\gb_1\theta^{*} + \theta^{*}_0)),
 \end{aligned}
 \end{equation}
 where $0<\gb'<\gb_1$, while $\theta_0,\theta^{*}_0, \gb_1$ satisfy the following conditions:
\begin{align} \label{con-angles}
 0<\gb<\gb_1<1, \theta_0 >\pi/2,\theta^{*}_0 >0, \gb_1 \pi/2 +\theta_0 < \pi,\gb_1 \pi +\theta^{*}_0 < \pi.
 \end{align}
 This enables one to show  the uniqueness of bounded solutions of  the equations \eqref{eqn-lin-V} and \eqref{con-oblique-v2}, which implies that the solution $V$ has to be $\gd \vb_2$.

On the other hand,  \eqref{est-vb2higher} and \eqref{est-gdvb2log} imply $\|\gd\vb_{2}\|_{2,\ga; (0,1)}^{(-\ga;\{0,1\})}\le \|(g^+)' \|_{2,\ga;(\gb); \R^+}^{(-\ga;\{0,1\})}$.  Similar to the estimates for \eqref{est-gdvb1}, using the barrier function $\vf$ defined by \eqref{def-vf}   with the conditions \eqref{con-angles} can yield a solution $V$ to \eqref{eqn-lin-V},  \eqref{con-oblique-v2}  with the following estimate
 \begin{align*}
 \|V \|_{2,\ga;(\gb); \QQ^+ }^{(-\ga;\EE)} \le   C	\|(g^+)' \|_{2,\ga;(\gb); \R^+}^{(-\ga;\{0,1\})}.
 \end{align*}
This together with \eqref{est-gdvb1}, implies the estimate
   \begin{align} \label{est-vbar}
  \|\gd \vvb \|_{2,\ga;(\gb); \QQ^+ }^{(-\ga;\EE)} \le   C	\|(g^+)' \|_{2,\ga;(\gb); \R^+}^{(-\ga;\{0,1\})}.
  \end{align}

{\bf Step 3. The fixed point of $\FF$.} Assume
\begin{align}\label{con-g+small}
  \|(g^+)' \|_{2,\ga;(\gb); \R^+}^{(-\ga; \{0,1\})} \le \tau,
\end{align}
where  $C \tau < \gs$ with $C$  given in \eqref{est-vbar}.  Then estimate \eqref{est-vbar} guarantees $\gd \vvb \in \GG_{*}^\gs \X \GG^\gs$.

Thus we define a map $\FF: \GG_{*}^\gs \X \GG^\gs \to \GG_{*}^\gs\X \GG^\gs $ by $\gd \vvb := \FF(\gd \vv)$  to solve the equations \eqref{eqn-v1bv2b}  and \eqref{eqn-lin-gdvb} with the boundary conditions \eqref{con-bdlinvb1}  and \eqref{con-bdlinvb2} for given  $\gd \vv$. Observe that $ \GG_{*}^\gs \X \GG^\gs $ is a compact convex set in $ C^{2,\ga';(\gb)}_{(-\ga ;\EE)}(\QQ^+)  \X C^{2,\ga';(\gb)}_{(-\ga;\EE)}(\QQ^+) $ for $0<\ga'<\ga$ and $\FF$ is continuous. Indeed, the continuity of $\FF$ can be proved by the argument for the estimate \eqref{est-v}. By the Schauder fixed point theorem, there exists a fixed point $\gd \vv^+ \in \GG_{*}^\gs \X \GG^\gs$ for $\FF$. Hence, $\vv^+ = \vv_0^+ + \gd \vv^+$ is a solution to equations \eqref{eqn-v1v2}, \eqref{eqn-N+} with boundary conditions \eqref{Con-bd-phi+-wall},  \eqref{Con-bd-H+phi+}.

 Now we show the uniqueness of the solution  for $  \vv^+ \in \GG_{*}^\gs \X \GG^\gs$.

 Suppose that  $\vv^+$ and $\vvb^+$ are both solutions to \eqref{eqn-v1v2} and \eqref{eqn-N+} with boundary conditions \eqref{Con-bd-phi+-wall}, \eqref{Con-bd-H+phi+}. Then $\gd \vv := \vvb^+ -\vv^+$ satisfies equation \eqref{eqn-v1v2} and the following equation
\begin{align*}
&\left(\left(\int_0^1 (N^+_i)_{v^+_j}(\vv^+ + t\gd \vv)\dx t \right) \gd v_j\right)_{y_i}  =0,
\end{align*}
with boundary conditions
\begin{align*}
\gd v_1 (y_1,0) =0, \quad \gd v_2 (0,y_2) =0.
\end{align*}
Then by the estimate \eqref{est-v} in the proof  of Proposition \ref{prop-estimate-gdvb1},  $\gd \vv \equiv 0$, provided  that $\gd \vv  \in \GG^{2\gs} \X\GG^{2\gs}$. In summary, we have the following result:
\begin{proposition}\label{prop-nonlinear}
For suitably small $\gs$ and $\tau$, if $g^+$ satisfies  \eqref{con-g+small}, then there exists a  solution $\vv^+$ to equations  \eqref{eqn-v1v2},  \eqref{eqn-N+} with boundary conditions \eqref{Con-bd-phi+-wall}, \eqref{Con-bd-H+phi+}, satisfying the following estimate:
	\begin{equation}
	\|\vv^+ - \vv^+_0  \|_{2,\ga;( \gb); \QQ^+}^{(-\ga;\EE)} \le C \|(g^+)' \|_{2,\ga;( \gb); \R^+}^{( -\ga;\{0,1\})}. \label{est-gdv+}
	\end{equation}
	The solution is unique for $ \vv^+ - \vv^+_0 \in \GG^\gs_*  \X\GG^\gs$.
\end{proposition}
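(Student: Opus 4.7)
The plan is to follow the three-step strategy already outlined in the text preceding the proposition: linearize around the background state, solve the resulting linear mixed boundary value problem in the weighted H\"older class, and apply a Schauder fixed point theorem to recover a solution of the nonlinear system. Existence of the linearization problem is powered by Proposition \ref{prop-estimate-gdvb1}, and the smallness assumption \eqref{con-g+small} is what keeps the iteration inside the ball $\GG^\gs_*\X\GG^\gs$.

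First I would linearize equations \eqref{eqn-v1v2} and \eqref{eqn-N+} around $\vv_0^+$ in the direction of an arbitrary frozen increment $\gd\vv\in\GG^\gs_*\X\GG^\gs$, producing the system \eqref{eqn-v1bv2b}--\eqref{eqn-lin-gdvb} with boundary conditions \eqref{con-bdlinvb1}--\eqref{con-bdlinvb2} for the new unknown $\gd\vvb$. Using \eqref{def-aijv} this reduces to the scalar second-order elliptic equation \eqref{eqn-lin-gdvb2order} for $\gd\vb_1$, supplemented by the Dirichlet condition $\gd\vb_1=(g^+)'$ on $L^+$ and the oblique derivative condition \eqref{con-oblique-v1} on $H^+$. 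After the $y_1$-rescaling displayed just before \eqref{est-gdvb1}, the coefficients satisfy assumption \eqref{con-aij} provided $\gs$ is small, and Proposition \ref{prop-estimate-gdvb1} yields $\gd\vb_1$ together with the estimate \eqref{est-gdvb1} controlled by $\|(g^+)'\|_{2,\ga;(\gb);\R^+}^{(-\ga;\{0,1\})}$.

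Second, define $\gd\vb_2$ through the path integral \eqref{exp-gdvb2}, which automatically enforces \eqref{eqn-v1bv2b} and the Neumann condition on $H^+$. The key technical step is to show $\gd\vb_2\in C^{2,\ga;(\gb)}_{(-\ga;\EE)}(\QQ^+)$: the pointwise bound \eqref{est-vb2higher} combined with integration along suitable curves gives the uniform bound \eqref{est-gdvb2bd}--\eqref{est-gdvb2log}, but corner regularity and the far-field decay cannot be read off the integral alone. To get these, differentiate \eqref{eqn-v1bv2b} and \eqref{eqn-lin-gdvb} so as to eliminate $\gd\vb_1$, obtaining the elliptic equation \eqref{eqn-lin-V} for $V:=\gd\vb_2$ with mixed boundary conditions \eqref{con-oblique-v2}. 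Proposition \ref{prop-estimate-gdvb1} applied to $V$, using the barrier function $\vf_4$ in \eqref{def-vf5} and the angle choices \eqref{con-angles} to handle singularities at $O,T$ and slow decay at infinity, pins down $V=\gd\vb_2$ by uniqueness and delivers the weighted estimate \eqref{est-vbar}. This is the step I expect to be the main obstacle, because $\gd\vb_2$ is produced nonlocally from $\gd\vb_1$ and one must reconcile that construction with the sharp weighted H\"older bound.

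With \eqref{est-vbar} in hand, fix $\gs,\tau$ so that $C\tau<\gs$; then $\gd\vv\mapsto\gd\vvb=:\FF(\gd\vv)$ sends $\GG^\gs_*\X\GG^\gs$ into itself. This target set is convex and, by the Arzel\`a--Ascoli theorem in weighted norms, compact in $C^{2,\ga';(\gb)}_{(-\ga;\EE)}(\QQ^+)\X C^{2,\ga';(\gb)}_{(-\ga;\EE)}(\QQ^+)$ for any $\ga'<\ga$. Continuity of $\FF$ in this weaker topology is obtained by subtracting two linearizations and applying the linear estimate \eqref{est-v} to the difference, which depends linearly on the coefficient perturbation. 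Schauder's fixed point theorem then produces $\gd\vv^+=\FF(\gd\vv^+)$, so $\vv^+=\vv_0^+ +\gd\vv^+$ solves the original nonlinear problem and inherits the bound \eqref{est-gdv+}. For uniqueness in $\GG^\gs_*\X\GG^\gs$, two solutions $\vv^+,\vvb^+$ with difference $\gd\vv$ satisfy a linear system with coefficients $\int_0^1(N^+_i)_{v^+_j}(\vv^++t\gd\vv)\,\dx t$ and zero boundary data; since these coefficients still satisfy \eqref{con-aij} for $\gs$ small, the uniqueness half of Proposition \ref{prop-estimate-gdvb1} forces $\gd\vv\equiv 0$.
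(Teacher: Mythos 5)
Your proposal reproduces the paper's Step 1--Step 3 argument essentially verbatim: the linearization with coefficients frozen at $\vv^+ = \vv_0^+ + \gd\vv$ (your phrase ``around $\vv_0^+$'' is mild shorthand for this, as the referenced equations make clear), reduction to the scalar elliptic problem \eqref{eqn-lin-gdvb2order}, the nonlocal definition \eqref{exp-gdvb2} of $\gd\vb_2$ supplemented by the separate elliptic problem \eqref{eqn-lin-V}--\eqref{con-oblique-v2} with barrier $\vf_4$ to recover the weighted decay, the Schauder fixed point in the compactly embedded $C^{2,\ga';(\gb)}_{(-\ga;\EE)}$ topology, and the uniqueness via the averaged-coefficient linear equation. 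This is the paper's proof.
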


\begin{remark} Similarly, for given suitably $g^-$, one can solve the fixed boundary value problem, \eqref{eqn-N-}-\eqref{Con-bd-phi--wall}, on $\QQ^-$ to obtain a unique solution $\vv^-$ satisfying \eqref{est-gdv+} with $\vv^+$, $\vv^+_0$, $g^+$  and $\QQ^{+}$ replaced by $\vv^-$, $\vv^-_0$, $g^-$  and $\QQ^{-}$respectively.
\end{remark}

\section{Construction and properties of the slip line} \label{sec:map}

With the solvability and the corresponding estimates of the fixed boundary value problems (with fixed $\Gamma_T$), Proposition 4.3 and Remark 4.3, we will be able to solve the free boundary value problem, the {\bf Reduced Problem}, by an elaborate use of the implicit function theorem in this section.

Define a  Banach space
\begin{align*}
\Sigma:= C^{2,\ga;(\gb)}_{(-\ga; \{1\})}((1,\infty)),
\end{align*}
equipped with norm
\begin{align*}
\|\cdot \|_\Sigma:= \| \cdot \|_{2,\ga;(\gb);(1,\infty)}^{(-\ga;\{1\})} .
\end{align*}
Set
\begin{align*}
V^{\ve_1}_*= \{ w    \in \GS: w(1)= \ve k^*, \|w\|_\GS < \ve_1 \}.
\end{align*}
For any $w \in V^{\ve_1}_*$, let
\begin{align}
g_T(t)& := \ve h_0 + \int_1 ^t w (s) \dx s , \quad t>1 \label{def-gcT}
\end{align}
and $g^{\pm}$  be  defined by \eqref{def-gc}. Choose suitably small $\ve_1$  so that \eqref{con-g+small} holds for $w\in V^{\ve_1}_* $ and $|\ve| \le \ve_1 $.
It then follows from Proposition 4.3 and Remark 4.3 that there exist $\vv^+ $ and $\vv^-$ which solve the fixed boundary value problem, \eqref{eqn-v1v2} and \eqref{eqn-N+}-\eqref{Con-bd-H+phi+} on $\QQ^{+}$ and \eqref{eqn-N-}-\eqref{Con-bd-phi--wall} on $\QQ^{-}$, respectively.  Set
\begin{align}\label{def-gdp}
   [\, p \,] &:= \left.\left(N_2^+(  \vv^+) - N_2^-(  \vv^-) \right) \right|_{(1,\infty)\X \{0\} },
\end{align}
and then define a map $\TT: (-\ve_1,\ve_1)\X V^{\ve_1}_* \to \GS$ by
\begin{align*}
\TT (\ve,  w) &:= [\, p \,].
\end{align*}

The condition \eqref{RH7} for the pressure, i.e. (3.21), can be written as the equation
\begin{align*}
\TT (\ve,  w)  = \mathbf{0},
\end{align*}
which will be solved by the implicit function theorem stated below for convenience.

\begin{lem}[Implicit function theorem]
	Let $X,Y,Z$ be Banach spaces over $\R$ and $V(x_0,y_0)$ is an open neighborhood of $(x_0,y_0)$ in $X \X Y$. Suppose that the mapping $F: V(x_0,y_0) \to Z$ satisfies:
	\begin{enumerate}
		\item $F(x_0,y_0)=0$;
		\item The partial Fr\'{e}chet derivative $D_y F$  exists on  $V(x_0,y_0)$ and  $D_y F(x_0,y_0): Y \to Z$  is bijective;
		\item $F$ and $D_y F$ are continuous at $(x_0,y_0)$.
	\end{enumerate}
Then there exist two positive constants $\ve_1$ and $\ve_2$, such that for any $x \in X$ satisfying $\|x -x_0\| \le \ve_1$, $F(x,y)=0$ has a unique solution  $y(x) \in Y$ with $\|y(x) -y_0\| \le \ve_2$.

Furthermore, if $F$ is continuous  in a neighborhood of  $(x_0,y_0)$,   then $y(\cdot)$ is also continuous in a neighborhood of  $x_0$; if  $F$  is a $C^1$-map on a neighborhood of  $(x_0,y_0)$, then $y(\cdot)$ is also a  $C^1$-map on a neighborhood of  $x_0$.

\end{lem}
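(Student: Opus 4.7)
The plan is to reduce $F(x,y)=0$ to a parameter-dependent fixed point equation to which the Banach contraction mapping principle applies. Set $L := D_y F(x_0, y_0)$; by hypothesis $L \in \LL(Y,Z)$ is a bounded linear bijection, so the Banach open mapping theorem supplies a bounded inverse $L^{-1} \in \LL(Z, Y)$. Introduce the auxiliary map
\[
\Phi(x, y) := y - L^{-1} F(x, y),
\]
whose fixed points in $y$ coincide with the zeros of $F(x, \cdot)$.

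The first step, which carries the main quantitative content, is to exhibit a closed ball $\bar{B}_\rho(y_0) \subset Y$ and a neighborhood $U$ of $x_0$ in $X$ on which $\Phi(x, \cdot)$ is a $\tfrac{1}{2}$-contraction that preserves the ball. A direct computation gives $D_y \Phi(x_0, y_0) = I_Y - L^{-1} L = 0$, so continuity of $D_y F$ at $(x_0, y_0)$ together with boundedness of $L^{-1}$ yields $\|D_y \Phi(x, y)\|_{\LL(Y)} \le \tfrac{1}{2}$ on $U \times \bar{B}_\rho(y_0)$ after $U$ and $\rho$ are shrunk. The Banach-space mean value inequality (obtained from Hahn--Banach applied to the scalar function $t \mapsto \ell(\Phi(x, y + t(y' - y)))$) then upgrades this pointwise bound to $\|\Phi(x, y) - \Phi(x, y')\| \le \tfrac{1}{2} \|y - y'\|$. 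Using the normalization $F(x_0, y_0) = 0$ and joint continuity of $F$ at $(x_0, y_0)$, we further shrink $U$ so that $\|\Phi(x, y_0) - y_0\| = \|L^{-1} F(x, y_0)\| \le \rho/2$ on $U$; combined with the contraction estimate this forces $\Phi(x, \cdot) : \bar{B}_\rho(y_0) \to \bar{B}_\rho(y_0)$.

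Banach's fixed point theorem applied with parameter $x$ then produces, for every $x \in U$, a unique $y(x) \in \bar{B}_\rho(y_0)$ with $\Phi(x, y(x)) = y(x)$, hence $F(x, y(x)) = 0$. Choosing $\ve_1$ so that $\{x : \|x - x_0\| \le \ve_1\} \subset U$ and setting $\ve_2 := \rho$ gives the quantitative existence and uniqueness claim. Continuity of $y(\cdot)$ at $x_0$ (and, under the stronger hypothesis, on a neighborhood of $x_0$) comes from the identity $y(x) - y(x') = \Phi(x, y(x)) - \Phi(x', y(x'))$: inserting $\pm \Phi(x, y(x'))$, the contraction bound absorbs $\tfrac{1}{2}\|y(x) - y(x')\|$, and joint continuity of $\Phi$ in $x$ controls the remainder.

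For the $C^1$ addendum, I would formally differentiate $F(x, y(x)) = 0$ to guess $D y(x) = -[D_y F(x, y(x))]^{-1} D_x F(x, y(x))$, verify that $D_y F(x, y(x))$ stays invertible near $(x_0, y_0)$ by a Neumann series centered at $L$, and then directly check from the $C^1$ regularity of $F$ that this candidate is indeed the Fr\'echet derivative of $y$, with continuous dependence on $x$. The main obstacle is not any single step but the coupled bookkeeping: the radius $\rho$ in $Y$, the size of $U$ in $X$, and the threshold for the Neumann series must all be chosen compatibly, and one must obtain the mean value inequality using only continuity of $F$ in $x$ rather than any differentiability there.
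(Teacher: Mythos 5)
Your contraction-mapping argument is correct and is the standard proof of the Banach-space implicit function theorem; the paper does not prove this lemma itself but simply cites Theorem 4.B of Zeidler \cite{ze}, whose proof proceeds by the same reduction to a parameter-dependent fixed point of $\Phi(x,y)=y-L^{-1}F(x,y)$. Your bookkeeping of $\rho$, $U$, and the Neumann-series threshold, and your care to use only pointwise continuity of $F$ and $D_yF$ at $(x_0,y_0)$ for the basic existence/uniqueness claim, match the hypotheses exactly.
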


The precise statement of this lemma is given by Theorem 4.B page 150 in \cite{ze}.

We now verify the conditions in the implicit function theorem.

Obviously, $\TT (0,\mathbf{0})= \mathbf{0}$.

{\bf Differentiability of $\TT$.} Consider the perturbation of  a given point  $(\ve,  w)$ as $(\tilde{\ve}, \tilde{ w}) = (\ve +\gd \ve,  w + \gd  w)$. The boundary data $g^{\pm}$ change to $\tilde{g}^{\pm} = g^{\pm} +\gd g^{\pm}$. It follows from the definition  \eqref{def-gcT} that
\begin{align*}
\gd g^{\pm} (t) &  = \begin{cases}
 \gd \ve \gz^{\pm}(t), & 0 \le t \le 1;\\
 \gd \ve h_0 + \int_1 ^t  \gd w (s)\, \dx s, & t>1.
\end{cases}
\end{align*}
We linearize equations \eqref{eqn-N+} and \eqref{eqn-N-} around $(\vv^+, \vv^-)$ respectively and solve the following linear problems:
\begin{align}\label{eqn-DT}
 \begin{cases}
\left((N^{\pm}_i)_{v^{\pm}_j}( \vv^{\pm})  \gd v^{\pm}_j\right)_{y_i}=0  &  \text{ in } \QQ^{\pm};\\
 \gd v_1^{\pm} =(\gd g^{\pm})' & \text{  on }  L^{+};\\
  \gd v_2^{\pm} =0 & \text{  on } H^{\pm}.
\end{cases}
\end{align}
By Proposition \ref{prop-estimate-gdvb1}, the above problems are uniquely solvable in $ C^{2,\ga;( \gb)}_{(-\ga;\EE)}(\QQ^{\pm})$, if $  \vv^{\pm} - \vv_0^{\pm} \in \GG^\gs_* \X \GG^\gs$

Then we linearize  the right hand side of \eqref{def-gdp} and set
\begin{align*}
\gd p  :=  \left. \left((N_2^+(\vv^+))_{\vv^+}\cdot  \gd \vv^+ - (N_2^-(  \vv^-))_{\vv^-}\cdot   \gd \vv^-   \right )\right|_{(1,\infty)\X \{0\} }.
\end{align*}
Noticing that $\gd p $ is linear in $\gd \vv^{\pm}$ and $\gd \vv^{\pm}$ is linear in $(\gd \ve, \gd  w)$, we define a linear map $D\TT(\ve, w)$ of $(\gd \ve, \gd  w)$ by
\begin{align*}
D\TT(\ve, w) (\gd \ve, \gd  w) :=   \gd p.
\end{align*}
Since  $w(1) = \tilde{w} (1) = \ve k^* $, we require that $\gd w (1) =0$. Set
\begin{align*}
\Sigma_0 := \{v \in \Sigma:  v(1) = 0\}.
\end{align*}
Then $D\TT(\ve, w)$ is a linear map from $(-\ve_1,\ve_1) \X \Sigma_0$ to $  \Sigma $.

Now we show that $D \TT (\ve, w)$ is the differential of $\TT$ at $ (\ve, w)$.

Let $\tilde{\vv}^+$ be the solution to \eqref{eqn-v1v2} \eqref{eqn-N+} with boundary data $\tilde{g}^+$ and $\Phi :=\tilde{\vv}^+ - \vv^+ - \gd \vv^+ $. Then $\Phi$ satisfies
\begin{align*}
  \left((N^+_i)_{v^+_j}(  \vv^+)   \Phi_j \right)_{y_i} &= F,
\end{align*}
with homogeneous boundary data,  where
\begin{align*}
 F=\left(\left((N^+_i)_{v^+_j}( \vv^+) - \int_0^1 (N^+_i)_{v^+_j}( \vv^+ + t (\tilde{\vv}^+ - \vv^+))\dx t \right) (\tilde{ v}_j^+ - v_j^+)\right)_{y_i}.
\end{align*}
It thus follows from the proof of Proposition \ref{prop-estimate-gdvb1} that
	\begin{align}\label{est-PhiF}
	\|\Phi \|_{2,\ga;( \gb); \QQ^+}^{(-\ga;\EE)} \le  C\|F\|_{0,\ga;(2+\gb); \QQ^+}^{(2-\ga ;\EE)} \le C 	\left(\|\tilde{\vv}^+ - \vv^+ \|_{2,\ga;( \gb); \QQ^+}^{(-\ga;\EE)}\right)^2.
	\end{align}
Since $\tilde{\vv}^+ - \vv^+ $ satisfies
\begin{align}\label{eqn-phit-phi}
\left(\left( \int_0^1 (N^+_i)_{v^+_j}( \vv^+ + t (\tilde{\vv}^+ - \vv^+))\,\dx t \right) (\tilde{ v}_j^+ - v_j^+ )\right)_{y_i}=0,
\end{align}
with boundary data $(\gd g^+)'$, so Proposition \ref{prop-estimate-gdvb1} yields the following estimate
	\begin{align}\label{est-phit-phi}
 \|\tilde{\vv}^+ - \vv^+ \|_{2,\ga;( \gb); \QQ^+}^{(-\ga;\EE)} \le C
 \|(\gd g^+)' \|_{2,\ga;(\gb); \R^+}^{(-\ga; \{0,1\})} \le C(|\gd \ve| + \|\gd  w\|_{\GS})
	\end{align}
Similar estimates hold in $\QQ^-$ with $H^-$. This and estimates \eqref{est-PhiF} and \eqref{est-phit-phi} lead to
	\begin{align}\label{est-phit-phi2}
	\|\tilde{\vv}^{\pm} - \vv^{\pm} - \gd \vv^{\pm}\|_{2,\ga;( \gb); \QQ^{\pm}}^{(-\ga;\EE)} \le
C(|\gd \ve| + \|\gd  w\|_{\GS})^2.
	\end{align}
Hence, with estimates \eqref{est-phit-phi} and \eqref{est-phit-phi2}, we have
\begin{align*}
&\left\| \TT(\tilde{\ve}, \tilde{ w}) - \TT (\ve,  w) - D\TT(\ve, w) (\gd \ve, \gd  w)\right\|_{\GS}\\
={}& \left\|[\, \tilde{p } \,] - [\, p \,] -  \gd p \right\|_{\GS}\\
\le{} & C \sum_{I=\pm}\left\|N_2^{I}(\tilde{\vv}^{I}) - N_2^{I}(  \vv^{I}) - (N_2^{I})_{\vv^{I}} (  \vv^{I})\cdot\gd \vv^{I} \right\|_{2,\ga;( \gb); \QQ^{I}}^{(-\ga;\EE)}\\
\le{} & C \sum_{I=\pm}\Big(\left\|N_2^{I}(\tilde{\vv}^{I}) - N_2^{I}(  \vv^{I}) -(N_2^{I}(  \vv^{I}))_{\vv^{I}} \cdot ( \tilde{\vv}^{I} -  \vv^{I})\right\|_{2,\ga;( \gb); \QQ^{I}}^{(-\ga;\EE)} \\
&+ \left\|(N_2^{I}(  \vv^{I}))_{\vv^{I}} \cdot(\tilde{\vv}^{I} - \vv^{I} - \gd \vv^{I}) \right\|_{2,\ga;( \gb); \QQ^{I}}^{(-\ga;\EE)}\Big)\\
\le{} & C  \sum_{I=\pm}\left( \left( \left\|\tilde{\vv}^I - \vv^I \right\|_{2,\ga;( \gb); \QQ^I}^{(-\ga;\EE)}\right)^2 +	\left\|\tilde{\vv}^{I} - \vv^{I} - \gd \vv^{I}\right\|_{2,\ga;( \gb); \QQ^{I}}^{(-\ga;\EE)}  \right)\\
\le{} & C (|\gd \ve| + \|\gd  w\|_{\GS})^2.
\end{align*}
This implies that $D\TT(\ve, w) (\gd \ve, \gd  w)$ is the differential of $\TT$ at $(\ve, w) $.

To show $\TT$ is $C^1$, we need to prove that $D\TT $ is continuous in  $(\ve, w) \in  (-\ve_1,\ve_1)\X V^{\ve_1} $.
Note that for any $(\tilde{\ve}, \tilde{ w})$ near $(\ve, w)$ and  $( \gd \ve,  \gd  w)\in (-\ve_1,\ve_1) \X \GS_{0}$,   $\tilde{\vv}^+$ solves \eqref{eqn-v1v2} \eqref{eqn-N+} with boundary data $\tilde{g}^+$.
Thus, $\tilde{\vv}^+ - \vv^+$ satisfies equation \eqref{eqn-phit-phi} with boundary data $\tilde{g}^+ - g^+ $. Denote
\begin{align*}
\kappa_1 &:= |\tilde{\ve} -  \ve| + \|\tilde{ w} -   w\|_{\GS},\\
\kappa_2 &:= |\gd \ve| + \|\gd   w\|_{\GS}.
\end{align*}
It follows from \eqref{est-v} in Proposition \ref{prop-estimate-gdvb1} that
	\begin{align}\label{est-phit-phi3}
  \|\tilde{\vv}^+ - \vv^+ \|_{2,\ga;( \gb); \QQ^+}^{(-\ga;\EE)}  \le C
  \|(\tilde{g}^+)' - (g^+)'\|_{2,\ga;(\gb); \R^+}^{(-\ga; \{0,1\})}\le C\kappa_1.
	\end{align}
Note that $\gd \tilde{\vv}^+ $ and  $\gd  \vv^+ $  take the same boundary conditions  and satisfy the equation \eqref{eqn-DT} with the coefficients  $(N^{+}_i)(\tilde{\vv}^{+})_{v^{+}_j}$ and  $(N^{+}_i)({\vv}^{+})_{v^{+}_j},$  respectively. Therefore,  $\gd \tilde{\vv}^+ -\gd  \vv^+ $solves the following equation
\begin{align*}
 \left((N^+_i)_{\vv^+}(\vv^+ ) \cdot  (\gd \tilde{\vv}^+ - \gd \vv^+ ) \right)_{y_i}
&=  \left(\left((N^+_i)_{\vv^+}(\tilde{\vv}^+ ) - (N^+_i)_{\vv^+}(\vv^+ ) \right) \cdot \gd \tilde{\vv}^+  \right)_{y_i}
\end{align*}
with homogeneous boundary conditions.
Noticing the fact that
\begin{align*}
 \|\gd \tilde{\vv}^+ \|_{2,\ga;( \gb); \QQ^+}^{(-\ga;\EE)}  \le C
  \|(\gd {g}^+)'\|_{2,\ga;(\gb); \R^+}^{(-\ga; \{0,1\})}   \le C\kappa_2,
\end{align*}
one can get
\begin{align*}
 \|\gd \tilde{\vv}^+ - \gd \vv^+ \|_{2,\ga;( \gb); \QQ^+}^{(-\ga;\EE)} &\le C
  \|\tilde{\vv}^+ - \vv^+ \|_{2,\ga;( \gb); \QQ^+}^{(-\ga;\EE)}    \|\gd \tilde{\vv}^+ \|_{2,\ga;( \gb); \QQ^+}^{(-\ga;\EE)}    \\
 & \le C \kappa_1 \kappa_2.
\end{align*}
Therefore,  the following estimate can be derived
\begin{align*}
&\| (D\TT(\tilde{\ve}, \tilde{ w}) - D\TT (\ve,  w)) (\gd \ve, \gd  w)\|_{\GS}\\
={}& \| \gd \tilde{p }  -  \gd p \|_{\GS}\\
\le{} &C  \|(N_2^{\pm})_{\vv^{\pm}} (  \tilde{\vv}^{\pm})\cdot  \gd \tilde{\vv}^{\pm}   - (N_2^{\pm})_{\vv^{\pm}} (  \vv^{\pm})\cdot  \gd \vv^{\pm} \|_{2,\ga;( \gb); \QQ^+}^{(-\ga;\EE)} \\
\le{} & C \left(\|\gd \tilde{\vv}^{\pm} - \gd \vv^{\pm} \|_{2,\ga;( \gb); \QQ^{\pm}}^{(-\ga;\EE)}
+ \| \tilde{\vv}^{\pm} -   \vv^{\pm} \|_{2,\ga;( \gb); \QQ^{\pm}}^{(-\ga;\EE)}  \|\gd \tilde{\vv}^{\pm} \|_{2,\ga;( \gb); \QQ^{\pm}}^{(-\ga;\EE)} \right)
 \\
  \le{} & C \kappa_1 \kappa_2,
\end{align*}
which implies that
\begin{align*}
\| D\TT(\tilde{\ve}, \tilde{ w}) - D\TT (\ve,  w)\|  \le C \kappa_1 = C (|\tilde{\ve} -  \ve| + \|\tilde{ w} -   w\|_{\GS}).
\end{align*}
Thus, $D\TT$ is Lipschitz continuous in $(-\ve_1,\ve_1)\X V^{\ve_1} $.

{\bf Isomorphism of $D_{ w} \TT (0,\mathbf{0})$.} We need to show that for any $\gd p \in \GS$, there exists a unique $\gd  w \in \GS_0$ such that $ D_{ w} \TT (0,\mathbf{0}) \gd  w = \gd p $.

Observe that $D_{ w} \TT (0,\mathbf{0})\gd  w =D \TT (0,\mathbf{0})(0, \gd w)   $.
When $\ve=0,  w= \mathbf{0} , \gd \ve=0$ in the procedure of defining $D\TT$ above, then $\gd \vv^{\pm}$ solve the following problems:
\begin{align}\label{prb-iso}
&\begin{cases}
a^{\pm}  ( \gd v_1^{\pm})_{  y_1} + b^{\pm}  (\gd v_2^{\pm})_{  y_2}=0  \\
   ( \gd v_1^{\pm})_{  y_2} -  (\gd v_2^{\pm})_{  y_1}=0 \end{cases}   \quad \text{ in } \QQ^{\pm};\\
&\begin{cases}
\gd v_1^{\pm} =(\gd  g)^{\prime}  & \text{  on } L^{+}\\
\gd v_2^{\pm} =0  & \text{  on } H^{\pm},
\end{cases} \label{con-iso}
\end{align}
where
\begin{align*}
&a^{\pm}  = (N^{\pm}_1)_{v_1^{\pm}}(\vv^{\pm}_0),  \quad
b^{\pm}  = (N^{\pm}_2)_{v_2^{\pm}}(\vv^{\pm}_0),\\
&\gd g (t)    = \begin{cases}
0, & 0\le t \le 1;\\
 \int_1 ^t  \gd w (s) \dx s, & t>1.
\end{cases}
\end{align*}
We stretch $\gd \vv^{\pm}$ in the $y_2$ direction and also flip $\gd \vv^-$ into $\QQ^+$ in the following way:
\begin{align*}
&\gd \vb_1^+ (y_1,y_2)   := \gd  v_1^+\left(y_1,  \sqrt{\tfrac{b^+}{a^+}} y_2\right), &&
\gd \vb_2^+ (y_1,y_2)    :=  \sqrt{\tfrac{b^+}{a^+}}\gd  v_2^+\left(y_1,  \sqrt{\tfrac{b^+}{a^+}} y_2\right),\\
&\gd \vb_1^- (y_1,y_2)    := \gd  v_1^-\left(y_1, -\sqrt{\tfrac{b^-}{a^-}} y_2\right), &&
\gd \vb_2^- (y_1,y_2)   :=- \sqrt{\tfrac{b^-}{a^-}}\gd  v_2^-\left(y_1, -\sqrt{\tfrac{b^-}{a^-}} y_2\right).
\end{align*}
Therefore,  each of the pairs $(\gd \vb_1^+,   \gd \vb_2^+ )$ and $(\gd \vb_1^-,   \gd \vb_2^- )$  satisfies  the Laplace equation with the same boundary conditions. By the uniqueness, we conclude that $\gd \vvb^+ = \gd \vvb^-$ in $\QQ^+$.

Note that
\begin{align*}
\gd p  &= (b^+  \gd v_2^+  -b^-  \gd v_2^-   )|_{ (1,\infty)\X \{0\}}= (\sqrt{a^+b^+}  +\sqrt{a^-b^-}) \gd \vb_2^+ |_{ (1,\infty)\X \{0\}}.
\end{align*}
To prove the isomorphism of $D_{ w} \TT (0,\mathbf{0})$, it suffices to show that the following problem is uniquely solvable for any given $ \gd p \in \GS$:
\begin{align}\label{prob-iso2}
&\begin{cases}
(\gd \vb_1^+)_{y_1} +(\gd \vb_2^+)_{y_2}  =0 \\
(\gd \vb_1^+)_{y_2} -(\gd \vb_2^+)_{y_1}  =0
\end{cases}  \quad  \text{ in }   \QQ^+;
\\
&\begin{cases}
\gd \vb_2^+ =0 & \text{ on }  H^+,  \\
 \gd \vb_1^+   =0 & \text{ on }  (0,1]\X \{0\},  \\
\gd \vb_2^+  =  \frac{\gd p  }{ \sqrt{a^+b^+}  +\sqrt{a^-b^-}}  & \text{ on } (1,\infty)\X \{0\}.\label{con-iso2}
\end{cases}
\end{align}

  We first  solve
\begin{align}
\GD(  \gd \vb_2^+  )=0  \label{eqn-gdphibar}
\end{align}
in $\QQ^+$
with the mixed boundary condition
\begin{align}\label{con-bdgdphiba}
   \begin{cases}
   \gd \vb_2^+  =0  & \text{ on }   H^+;\\
   (\gd \vb_2^+ )_{y_2} =0 & \text{ on }  (0,1)\X \{0\} ;\\
 \gd \vb_2^+  =  \frac{\gd p  }{ \sqrt{a^+b^+}  +\sqrt{a^-b^-}}  & \text{ on } (1,\infty)\X \{0\}.
\end{cases}
\end{align}
Problem \eqref{eqn-gdphibar}-\eqref{con-bdgdphiba} can be solved in the truncated domain and then taken limit as in Section \ref{sec-solvephi}.  Using the  barrier function
\begin{align*}
\vf_5 (\yy):= C \|\gd p \|_\GS\, r^{-\gb}\sin (\gb_1\theta + \theta_0)
\end{align*}
with
\begin{align*}
\gb < \gb_1 <1, \theta_0 \in (\tfrac{\pi}{2},\pi),\gb_1 \pi/2 +\theta_0 < \pi 
\end{align*}
for the decay estimate and the barrier function
\begin{align*}
\vf_6(\yy):= C \|\gd p \|_\GS\, (r^*)^{\ga}\sin (\gb_2\theta^* + \theta_0^*)
\end{align*}
with
\begin{align*}
\ga < \gb_2 <\frac{1}{2}, \theta^*_0 \in (\tfrac{\pi}{2},\pi),\gb_2 \pi +\theta_0^* < \pi 
\end{align*}
for the corner estimate near $T$, 
 we can derive the estimate
 	\begin{align}\label{est-dvbar2}
 \|\gd \vb^+_2   \|_{2,\ga;( \gb); \QQ^+}^{(-\ga;\{T\})} \le C \|\gd p \|_\GS.
 \end{align}

\begin{remark}
Due to the boundary  conditions at $O$  in \eqref{con-bdgdphiba} and using the reflection,  it is easy to see that the point $O$ is an interior point in the new domain and one thus can show  that  $\gd \vb^+_2 $ is $C^{2,\ga}$ up to the  point $O$.
\end{remark}

Next, we solve the following problem
\begin{align}\label{prob-isov1}
& \GD(  \gd \vb_1^+  )=0
 \quad  \text{ in }   \QQ^+; \\
&\begin{cases}
 \gd \vb_1^+= G(y_2):= -\int_{y_2}^\infty (\gd \vb_2^+)_{y_1} (0,s)\dx s  & \text{ on }  H^+,  \\
\gd \vb_1^+   =0 & \text{ on }  (0,1]\X \{0\},  \\
 (\gd \vb_1^+)_{y_2} =   (\gd \vb_2^+)_{y_1},   & \text{ on } (1,\infty)\X \{0\}.
\end{cases} \label{con-isov1}
\end{align}
It follows from   \eqref{est-dvbar2} that {$G(y_{2})$} defined in  \eqref{con-isov1} satisfies
\begin{align*}
&| G(y_2)| \le  C \|\gd p \|_\GS \, y_2^{-\gb} \quad \text{ for }  y_2>0,
\end{align*}
and $G^{\prime}(y_{2})=(\gd \vb_2^+)_{y_1} (0,y_{2}).$
It follows from  the same arguments as in Section \ref{sec-solvephi} that  $\gd \vb_1^+$ solves the problem \eqref{prob-isov1}-\eqref{con-isov1}  satisfying
\begin{align}
\|\gd \vb_1^+ \|_{2,\ga;( \gb); \QQ^+}^{(-\ga;\EE)} \le C \|\gd p \|_\GS. \label{est-gdv1}
\end{align}
Combining \eqref{est-dvbar2} with \eqref{est-gdv1} leads to
\begin{align}
\|\gd \vvb^+   \|_{2,\ga;( \gb); \QQ^+}^{(-\ga;\EE)} \le C \|\gd p \|_\GS. \label{est-gdv+gdp}
\end{align}

Next, we show that the solution $\gd\vb_2^+$ to the problems \eqref{eqn-gdphibar}-\eqref{con-bdgdphiba} and the solution $\gd\vb_1^+$ to the problems \eqref{prob-isov1}-\eqref{con-isov1} will lead to the solution to the original problem \eqref{prob-iso2}-\eqref{con-iso2}. Indeed, set
\begin{align*}
 \begin{cases}
V:= (\gd \vb_1^+)_{y_1} +(\gd \vb_2^+)_{y_2},   \\
W:=(\gd \vb_1^+)_{y_2} -(\gd \vb_2^+)_{y_1}.
 \end{cases}
\end{align*}
Then the system \eqref{eqn-gdphibar}, \eqref{prob-isov1} is equivalent to the following system
\begin{align} \label{eqn-VW}
\begin{cases}
V_{y_1} +W_{y_2}=0   \\
V_{y_2} - W_{y_1}=0
 \end{cases}   \quad  \text{ in }\ \QQ^+
\end{align}
and conditions (5.13) and \eqref{con-isov1} imply that
\begin{align} \label{con-VW}
\begin{cases}
V=0  & \text{ on }\   (0,1)\X \{0\}  \\
W=0   & \text{ on }\   H^+ \cup (1,\infty)\X \{0\}.
 \end{cases}
\end{align}
The system \eqref{eqn-VW} implies that there exists a harmonic potential $\Phi$ in $\QQ^+$ such that
\begin{align*}
\GD \Phi = 0,  \quad  \Phi(O) = 0, \quad \grad \Phi = (V,W).
\end{align*}
Conditions \eqref{con-VW} become
\begin{align} \label{con-Phi}
\begin{cases}
\Phi =0  & \text{ on }\    H^+ \cup (0,1]\X \{0\}  \\
\Phi_{y_2}=0   & \text{ on }\   (1,\infty)\X \{0\}.
\end{cases}
\end{align}
It follows from (5.18) that $\Phi$ is H\"{o}lder continuous up to  $O$ and $T$ . Therefore, the solution is unique and $\Phi =0$.
This implies that $V=W=0$ in $\QQ^+$.

Set $\gd w = \gd \vb_1^+ |_{[1,\infty)\X \{0\}}$. Then \eqref{est-gdv1}
shows that $\gd w \in \GS_{0}$.
Hence, we have shown that there exists a unique $\gd  w  \in \GS_{0} $ such that
$D_{w} \TT (0,\mathbf{0}) \gd  w = \gd p$.
This completes the proof of the isomorphism of  $ D_{w} \TT (0,\mathbf{0}) $.

Then by the implicit function theorem, there exists $\ve_0 >0$, such that $\TT (\ve, w) = 0$ is uniquely solvable for $0\le \ve \le \ve_0$ and $w(\ve) $ is $C^1$ on $[0,\ve_0]$ with
\begin{align}\label{est-wve}
\|w(\ve)\|_\GS \le C\ve.
\end{align}
Since $g^+$ is piecewise defined by $\ve\zeta^+$ on $[0,1]$ and $w(\ve)$ on $(1,\infty)$, the estimate \eqref{est-wve} implies that
\begin{align}\label{est-zeta+}
 \|(g^+)' \|_{2,\ga;(\gb); \R^+}^{(-\ga; \{0,1\})} \le C\ve.
\end{align}
Choosing $\ve_0$ suitably small so that $C\ve_0 \le \tau$, then \eqref{est-zeta+} implies the condition \eqref{con-g+small}. Hence,  the estimate  \eqref{est-gdv+} in Proposition \ref{prop-nonlinear}, together with \eqref{est-zeta+}, gives that
	\begin{equation}
\|U^{+} - U^+_0	\|_{2,\ga;( \gb); \QQ^+}^{(-\ga;\EE)} \le C\|\vv^+ - \vv^+_0  \|_{2,\ga;( \gb); \QQ^+}^{(-\ga;\EE)} \le C \ve. \label{est-U-U0}
	\end{equation}
Similar estimate holds in the domain $\QQ^-$.		
Thus, we have solved the {\bf reduced problem} with the following estimate:
	\begin{equation}
	\|U^{\pm} - U^{\pm}_0	\|_{2,\ga;( \gb); \QQ^+}^{(-\ga;\EE)}  + \| g_T\|_{3,\ga;( \gb-1); (1,\infty)}^{(-1-\ga ;\{1\})} \le C \ve. \label{est-U-U0+-}
	\end{equation}

Since the coordinate transformation \eqref{def-coord} is invertible and bi-Lipschitz,   the solution transformed back in $\xx$-coordinates solves the {\bf main problem} and the estimate \eqref{est-U-U0+-} implies the estimate \eqref{est-Ugc} in the {\bf Main Theorem}.  This completes the proof of  the {\bf Main Theorem}.

\begin{proof}[Proof of Theorem \ref{thm-stable}]
	Since $w(\ve)$ is a $C^1$ map on $[0,\ve_0]$, it follows that for any $\ve, \tilde{\ve} \in [0,\ve_0]$,
\begin{align}\label{est-stabw}
\|w(\ve) - w(\tilde{\ve} ) \|_\GS \le \sup_{\ve\in [0,\ve_0] } \|w'(\ve)\| |\ve - \tilde{\ve}|\le C|\ve - \tilde{\ve}|.
\end{align}	
This implies the estimate \eqref{est-stab}, hence proves Theorem \ref{thm-stable}.	
\end{proof}

\section*{Acknowledgments}
Jun Chen's research was supported in part by Yichun University Doctoral Start-up Grant 207-3360119008. The work of Aibin Zang was supported in part  by the National Natural Science Foundation of China (Grant no. 11771382). The research of Zhouping Xin was supported in part by Zheng Ge Ru Foundation and by Hong Kong RGC Earmarked Research Grants, CUHK14305315, CUHK14302819, CUHK14300917, and CUHK14302917.


\begin{thebibliography}{99}
\bibitem{Bae}	
\newblock M. Bae,
\newblock Stability of contact discontinuity for steady Euler system in infinite duct,
\newblock {\it Z. Angew. Math. Phys.},  {\bf 64} (2013), 917--936.	
 	
\bibitem{BP}	
\newblock M. Bae and H. Park,
\newblock Contact discontinuities for 2-Dimensional inviscid compressible flows in infinitely long nozzles,
\newblock {\it SIAM J. Math. Anal.},  {\bf 51} (2019), no. 3, 1730--1760.	



 	
 	
\bibitem{Bers}	
\newblock L. Bers,
\newblock Existence and uniqueness of a subsonic
flow past a given profile,
\newblock {\it Comm. Pure
	Appl. Math.},  {\bf 7} (1954), 441--504.


\bibitem{Bers2}
\newblock L. Bers,
\newblock Mathematical Aspects of Subsonic and Transonic Gas Dynamics, Surveys in Applied Mathematics,
\newblock {\it 3, John Wiley \& Sons, Inc., New York}, (1958).




\bibitem{cdxx}
C. Chen,  L. Du, C. Xie and Z. Xin,  Two dimensional subsonic Euler flows past a wall or a symmetric body,  {\it Arch. Ration. Mech. Anal.}, {\bf 221} (2016), no. 2, 559--602.


\bibitem{cx}
C. Chen and C. Xie, Three dimensional subsonic flows in bounded nozzles, {\it Journal of Differential Equations}, {\bf 256} (2014), 3684--3708.


\bibitem{CCF}
\newblock  G.-Q. Chen, J. Chen and M. Feldman,
\newblock Transonic shocks and free
boundary problems for the full Euler equations in infinite nozzles,
\newblock  {\it J. Math. Pures Appl.},  {\bf 88} (2007), no. 9, 191--218.


\bibitem{CCF2}
\newblock G.-Q. Chen, J. Chen and M. Feldman,
\newblock Stability and asymptotic behavior of transonic flows past wedges for the full Euler equations,
\newblock  {\it Interface Free Bound}, {\bf 19} (2017), 591--626.


\bibitem{CDX}
\newblock G.-Q. Chen, X. Deng, and W. Xiang,
\newblock Global steady subsonic flows through infinitely long
nozzles for the full Euler equations,
\newblock  {\it SIAM J. Math. Anal.},  {\bf 44} (2012), no. 4, 2888--2919.	



\bibitem{CF}
\newblock G.-Q. Chen and M. Feldman,
\newblock Free boundary problems and transonic shocks for the
Euler equations in unbounded domains,
\newblock  {\it Ann. Sc. Norm. Super. Pisa Cl. Sci.}, {\bf 3} (2004), no. 5, 827--869.



\bibitem{CHWX}
\newblock G.-Q. Chen, F.-M. Huang, T.-Y.Wang, and W. Xiang,
\newblock Steady Euler flows with large vorticity
and characteristic discontinuities in arbitrary infinitely long nozzles,
\newblock  {\it Adv. Math.},  {\bf 346} (2019),   946--1008.


\bibitem{CKY}
\newblock G.-Q. Chen, V. Kukreja, and H. Yuan,
\newblock Stability of transonic characteristic discontinuities in two-dimensional steady compressible Euler flows,
\newblock  {\it J. Math. Phys.},  {\bf 54} (2013),   doi:10.1063/1.4790887.




\bibitem{CKY2}
\newblock G.-Q. Chen, V. Kukreja, and H. Yuan,
\newblock Well-posedness of transonic characteristic discontinuities in two-dimensional steady compressible Euler flows,
\newblock  {\it Z. Angew. Math. Phys.},  {\bf 64} (2013),  1711--1727.



\bibitem{cj} J. Chen,  Subsonic flows for the full Euler equations in half-plane,
{\it J. Hyperbolic Differ. Equ.}, {\bf 6}  (2009), no. 2,  207--228.

%
%
\bibitem{Chensx}
\newblock S. Chen,
\newblock Stability of a Mach configuration,
\newblock  {\it   Commun. Pure Appl. Math.},  {\bf 59} (2006),  no. 1,  1--35.


\bibitem{dxx} L. Du, C. Xie and Z. Xin, Steady subsonic ideal flows through an infinitely long nozzle with large vorticity, {\it Comm. Math. Phy.}, {\bf 328} (2014), 327--354.



\bibitem{fg}
\newblock R. Finn, and  D. Gilbarg,
\newblock  Asymptotic behavior and uniqueness of
	plane subsonic flows,
of second order,
\newblock {\it Comm. Pure Appl. Math.},  \textbf{10} (1957), 23--63.

\bibitem{fg2}
\newblock R. Finn, and  D. Gilbarg,
\newblock  Three-dimensional subsonic flows and asymptotic estimates for
elliptic partial differential equations,
\newblock {\it Acta Math.},  \textbf{98} (1957), 265--296.


\bibitem{GH}
\newblock  D. Gilbarg and L. H\"ormander,
\newblock Intermediate Schauder estimates,
\newblock {\it Arch. Rational Mech. Anal.},  \textbf{74} (1980),
297--318.



\bibitem{gt}
\newblock D. Gilbarg and N. Trudinger,
\newblock
{\it Elliptic Partial Differential Equations of Second Order}, \newblock 2nd
Ed., Springer-Verlag: Berlin, 1983.


\bibitem{lxy} J. Li, Z. Xin and H. Yin, Transonic shocks for the full compressible Euler system in a general two dimensional de Laval nozzle, Arch. Rat. Mech. Anal., {\bf 207} (2013), No. 2, 533--581.


\bibitem{Liyy}
\newblock  Y.-Y. Li and  M. Vogelius,
\newblock Gradient estimates for solutions to divergence form elliptic equations with discontinuous coefficients,
\newblock \textit{Arch. Ration. Mech. Anal.}, \textbf{153} (2000), 91--151.


\bibitem{Lieberman}
\newblock G. M. Lieberman,
\newblock Mixed boundary value problems for elliptic and parabolic differential equations
of second order,
\newblock {\it J. Math. Anal. Appl.},  \textbf{113} (1986), 422--440.






\bibitem{Prandtl}
\newblock  L. Prandtl,
\newblock Tragfl\"ugeltheorie -- I.
\newblock   Mitteilung. Nachrichten der K. Gesellschaft zu G\"ottingen,  Mathematisch-Physikalische Klasse,   (1918), 451--477.


\bibitem{WY}	
\newblock Y.-G. Wang and F. Yu,
\newblock Structural stability of supersonic contact discontinuities in three-dimensional compressible steady flows,
\newblock {\it SIAM J. Math. Anal.},  {\bf 47} (2015), no. 2, 1291--1329.


\bibitem{xx} C. Xie and Z. Xin, Global subsonic and subsonic-sonic flows through infinitely long nozzles, {\it Indiana Univ. Math. J.}, {\bf 56} (2007), 2991--3023.


\bibitem{xx2} C. Xie and Z. Xin, Existence of global steady subsonic Euler flows through infinitely long nozzles, {\it SIAM J. Math. Anal.}, {\bf 42} (2010), 751--784.


\bibitem{xyy} Z. Xin, W. Yan and H. Yin, Transonic shock problem for the Euler system in a nozzle, {\it Arch. Rat. Mech. Anal.}, {\bf 194} (1) (2009), 1--47.


\bibitem{ze}
\newblock E. Zeidler,
\newblock {\it Nonlinear Functional Analysis and its applications I; Fixed Point Theorems},
\newblock Springer-Verlag New York, Inc. 1992.



\end{thebibliography}
\end{document}